\renewcommand{\tocsection}[3]{%
	\indentlabel{\@ifnotempty{#2}{\bfseries\ignorespaces#1#2.\quad}}\bfseries#3}
\newcommand{\R}{\mathbb R}
\newcommand{\N}{\mathbb N}
\newcommand{\dsp}{\displaystyle}
\newcommand{\eps}{\varepsilon}
\newtheorem{proposition}{Proposition}
\newtheorem{theorem}{Theorem}
\newtheorem{definition}{Definition}
\newtheorem{remark}{Remark}
\newtheorem{lemma}{Lemma}
\numberwithin{equation}{section}
\definecolor{darkspringgreen}{rgb}{0.09, 0.45, 0.27}
\definecolor{prune}{rgb}{0.44, 0.11, 0.11}
\begin{document}
\title{Mathematical modeling and numerical analysis for the higher order Boussinesq system}
\date{\today}
%
%
\author{Bashar Khorbatly}
\address{Lebanese American University (LAU), Graduate Studies and Research (GSR) office, School of Arts and Sciences, Computer Science and Mathematics Department, Byblos, Lebanon}
\email{bashar-elkhorbatly@hotmail.com}
\author{Ralph Lteif}
\address{Lebanese American University (LAU), Graduate Studies and Research (GSR) office, School of Arts and Sciences, Computer Science and Mathematics Department, Beirut, Lebanon}
\email{Corresponding author, ralph.lteif@lau.edu.lb}
\author{Samer Israwi}
\address{Lebanese University, Laboratory of Mathematics-EDST, Department of Mathematics, Faculty of Sciences 1, Beirut, Lebanon}
\email{s$\_$israwi83@hotmail.com}
\author{St\'ephane Gerbi}
\address{Laboratoire de Math\'ematiques UMR 5127 CNRS \& Universit\'e de Savoie Mont Blanc, Campus scientifique, 73376 Le Bourget du Lac Cedex, France}
\email{stephane.gerbi@univ-smb.fr}
\subjclass[2010]{35Q35, 35L45, 35L60, 76B45, 76B55, 35C07, 65L99} 
\keywords{Water waves, Boussinesq system, higher-order asymptotic model, well-posedness, traveling waves, explicit solution,
numerical validation.}

\date{\today}

\begin{abstract}
This study deals with higher-ordered asymptotic equations for the water-waves problem. We considered the higher-order/extended Boussinesq equations 
over a flat bottom topography in the well-known long wave regime. Providing an existence and uniqueness of solution on a relevant time scale of order $1/\sqrt{\eps}$ 
and showing that the solution's behavior is close to the solution of the  water waves equations with a better precision corresponding to initial data, 
the asymptotic model is well-posed in the sense of Hadamard. Then we compared several water waves solitary solutions with respect to the numerical solution of our model. 
At last, we solve explicitly this model and validate the results numerically.
\end{abstract}
\maketitle
\tableofcontents
\section{Introduction}
\subsection{The water-wave equations.}
In this paper, we investigate the one-dimensional flow of the free surface of a homogeneous, immiscible fluid moving above a flat topography $z=-h_0$. 
	The horizontal and vertical variables are denoted respectively by $x \in \R$ and $z \in \R$ and $t \geq 0$ stands for the time variable. 
	The free surface is parametrized by the graph of the function $\zeta(t,x)$ denoting the variation with respect to its rest state $z=0$ (see Figure~\ref{flattopdom}).
	The fluid occupies the strictly connected ($\zeta(t,x) + h_0 >0$) domain $\Omega_t$ at time $t\geq 0$ denoted by:
	$$ \Omega_t= \{ (x,z) \in \R^2; \ -h_0 \leq z \leq \zeta(t,x) \}.$$ 
	\begin{figure}[H]
		\centering
		\includegraphics[width=1.0\textwidth]{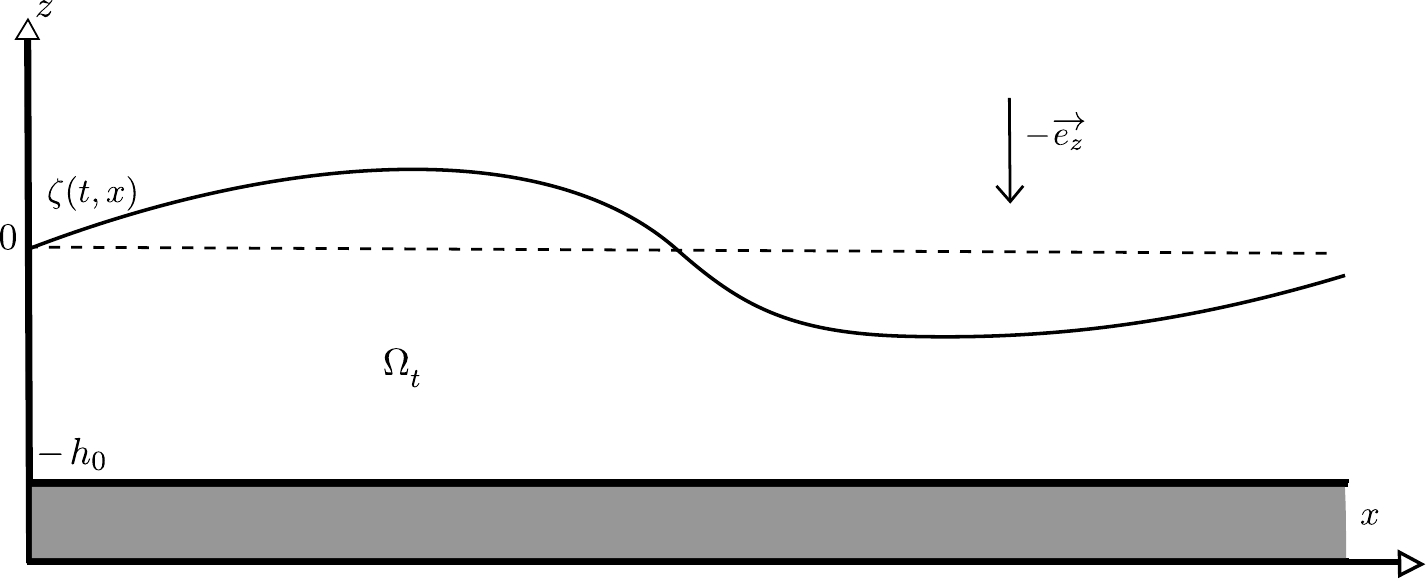}
		\caption{One-dimensional flat bottom fluid domain.}
		\label{flattopdom}
	\end{figure}
	\noindent The fluid is considered to be perfect, that is with no viscosity and only affected by the force of gravity.
	We also assume the fluid to be incompressible and the flow to be irrotational so that the velocity field is divergence and curl free.
	We denote by $(\rho,V)$ the constant density and velocity field of the fluid. The first boundary condition at the free surface expresses a balance of forces.
	Kinematic boundary conditions are considered assuming that both the surface and bottom are impenetrable, that is no particle of fluid can cross.
	The set of equations describing the flow is now complete and is commonly known as the \emph{full Euler} equations:
	\begin{equation}
		\left\{
		\begin{array}{lcl}
			\displaystyle\partial_t V+V\cdot\nabla_{x,z} V = -g\overrightarrow{e}_z-\dfrac{\nabla_{x,z} P}{\rho} & \hbox{in} & (x,z)\in \Omega_t, \ t\geq 0 \vspace{1mm},\\
			\displaystyle\nabla_{x,z}\cdot V=0 & \hbox{in} & (x,z)\in \Omega_t, \ t\geq 0  \vspace{1mm},\\
			\displaystyle\nabla_{x,z}\times V=0 & \hbox{in} & (x,z)\in \Omega_t, \ t\geq 0    \vspace{1mm},\\
			P|_{z=\zeta(t,x)}=0 & \hbox{for} &  t\geq 0,\ x \in \R,\\ 
			\displaystyle\partial_t \zeta-\sqrt{1+\vert \partial_x \zeta \vert^2} n_{\zeta}\cdot V|_{z=\zeta(t,x)} =0 & \hbox{for} & t \geq 0\vspace{1mm}, \ x \in \R,\\
			\displaystyle -V\cdot \overrightarrow{e_z} = 0 & \hbox{at} & z=-h_0 \vspace{1mm}, \ t\geq 0,
			\vspace{1mm},\\
\displaystyle \lim_{\vert(x,z)\vert\to\infty}\vert\zeta(x,z)\vert+\vert V(t,x,z)\vert=0 & \hbox{in} & (x,z)\in \Omega_t, \ t\geq 0 \; .
		\end{array}
		\right.
		\label{euler}
	\end{equation}
	where $n_{\zeta}=\dfrac{1}{\sqrt{1+|\partial_x \zeta|^2}} (-\partial_x \zeta, 1)^T$ denotes the upward normal vector to the free surface.
	
	The theoretical study of the above system of equations is extremely difficult due to its large number of unknowns and its time-dependent moving domain $\Omega_t$.
	In fact, we have a free boundary problem, in other words the domain is itself one of the unknowns. 
Using the assumption of irrotational velocity field, one can express the latter as the gradient of a potential function $\varphi$. 
	This potential satisfies the Laplace equation inside the fluid, $\Delta_{x,z} \varphi =0$ in $(x,z) \in \Omega_t$.
	Consequently, the evolution of the velocity potential is written now using Bernoulli's equation.
	Although the system now is simpler, a free boundary problem still exists. To get over this obstacle, Craig and Sulem~\cite{CS93,CSS92} had an interesting idea following Zakharov work~\cite{Zakharov68}, consisting of a reformulation of the system of equations~\eqref{euler} using the introduction of a Dirichlet-Neumann operator,
	thus reducing the dimension of the considered space and the unknowns number. Denoting by $\psi$ the trace of the velocity potential at the free surface, $\psi(t,x) =\varphi (t,x,\zeta(t,x))=\varphi _{| z=\zeta}$, the Dirichlet-Neumann operator is introduced
	\begin{equation*}\label{diriclet}
		\mathcal{G}[\zeta]\psi = -\big(\partial_x\zeta\big)\cdot\big(\partial_x\varphi\big)_{\mid_{z=\zeta}} + \big(\partial_z \varphi\big)_{\mid_{z=\zeta}} = \sqrt{1 + \big\vert\partial_x\zeta\big\vert ^2}\big(\partial_n\varphi\big)_{\mid_{z=\zeta}}
	\end{equation*}
	where $\varphi$ is defined uniquely from $(\zeta,\psi)$ as a solution of the following Laplace problem (see~\cite{Lannes2013} for a complete and accurate analysis):
	\begin{equation*}
		\left\{
		\begin{array}{lcl}\label{BVP1}
			\displaystyle\partial_x^2 \varphi+ \partial_z^2 \varphi = 0 & \hbox{in} & - h_0  < z < \zeta(t,x),\\
			\displaystyle\partial_z\varphi_{\mid_{z=-h_0}}=0,\\
			\displaystyle\varphi_{\mid_{z=\zeta}}=\psi(t,x) .
		\end{array}
		\right.
	\end{equation*}
	with $\partial_n = n. \nabla_{x,z}$ 
	the normal derivative in the direction of the concerned vector $n$. Thus, the evolution of only the two variables $(\zeta,\psi)$ located at the free surface characterize the flow. This system is known by the Zakharov/Craig-Sulem formulation of the water-waves equations giving :
\begin{equation}\label{Za}
\left\{
\begin{array}{lcl}
\displaystyle\partial_t \zeta-\frac{1}{\mu}\mathcal{G} [ \zeta]\psi= 0 \; ,\\
\displaystyle\partial_t\psi+\zeta+\frac{1}{2}\vert\partial_x \psi\vert^2 - \displaystyle\frac{( \mathcal{G}_{\mu}[ \zeta]\psi+\partial_x(\zeta)\cdot\partial_x\psi)^2}{2(1+ \vert\partial_x\zeta\vert^2)}= 0 \; .
\end{array}
\right.
\end{equation}
The above system of equations has a particularly rich structure, and depending on the physical properties of the flow, it is possible to obtain solutions to \eqref{Za} with different qualitative properties. Nonlinear effects, for example, become more important as wave amplitude increases. Although Zakharov's reformulation resulted in a reduced system of equations, the description of these solutions from a qualitative and quantitative point of view remains very complex.
	A remedy for this situation requires the construction of simplified asymptotic models whose solutions are approximate solutions of the full system. 
	These approximate models allow to describe in a fairly precise way the behavior of the complete system in a specific physical regime.
	This requires a rescaling of the system in order to reveal small dimensionless parameters which allow to perform asymptotic expansions of non-local operators (Dirichlet-Neumann),
	thus ignoring the terms whose influence is minimal.
	The order of magnitude of these parameters makes it possible to identify the considered physical regime. 
	We start by introducing respectively the commonly known nonlinear and shallowness parameters:
\begin{equation*}
\varepsilon=\frac{a}{h_0}=\frac{\text{amplitude of the wave}}{\text{reference depth}} \; , \qquad\qquad\sqrt{\mu}=\frac{h_0}{\lambda}=\frac{\text{reference depth}}{\text{wave-length of the wave}} \; ,
\end{equation*}
where $0\leq\varepsilon\leq 1$ is often called nonlinearity parameter, while $0\leq\mu \leq 1$ is called the shallowness parameter. 
In this manner, the dimensionless formulation of \eqref{Za} reads:
\begin{equation}\label{Zakharovv}
\left\{
\begin{array}{lcl}
\displaystyle\partial_t \zeta-\frac{1}{\mu}\mathcal{G}_{\mu}[\varepsilon\zeta]\psi= 0 \; ,\\
\displaystyle\partial_t\psi+\zeta+\frac{\varepsilon}{2}\vert\partial_x \psi\vert^2 -\varepsilon\mu\displaystyle\frac{(\frac{1}{\mu}\mathcal{G}_{\mu}[\varepsilon\zeta]\psi+\partial_x(\varepsilon\zeta)\cdot\partial_x\psi)^2}{2(1+\varepsilon^2\mu\vert\partial_x\zeta\vert^2)}= 0 \; ,
\end{array}
\right.
\end{equation}
where $\psi (t,x) =\varphi_{\mid_{z=\eps \zeta}}$ and $\mathcal{G}_\mu[\varepsilon\zeta]\psi = \sqrt{1 + \mu\eps^2\big\vert\partial_x\zeta\big\vert ^2}\big(\partial_n\varphi\big)_{\mid_{z=\varepsilon\zeta}}$.

Let us now identify the asymptotic geophysical shallow-water ($\mu\ll1$) category (or sub-regime) associated with our work. An additional assumption is made on the nonlinearity parameter, from which a diverse set of asymptotic models can be derived. More precisely, it is possible to deduce from \eqref{Zakharovv} a (much simpler) asymptotic model that is more amenable to numerical simulations and have more transparent properties. For instance, taking $\eps\sim\mu$ into account, the flow under consideration is said to be in a small amplitude regime.

\subsection{Shallow-water, flat bottom, small amplitude variations $(\mu\ll1, \eps\sim\mu)$.}

In this paper, we restrict our work on the well-known long waves regime with a flat topography for which the "original" or "standard" Boussinesq system can be derived. Defining the depth-averaged horizontal velocity by : 
\begin{equation}\label{defvelocity}
v(t,x)=\frac{1}{1 + \varepsilon\zeta (t,x)}\int_{-1}^{\varepsilon\zeta (t,x)}\partial_x\varphi(t,x,z)\hspace{0.1cm}dz \; ,
\end{equation}
under the extra assumption $\eps\sim\mu$, we can neglect the terms which are of order $\mathcal{O}(\mu^2)$ in the Green-Naghdi equations (we refer to \cite{GN76,GLN74} for formal derivation and to \cite{Israwi2011,Israwi2010,Khorbatly2021,AMBP_2018__25_1_21_0} for well-posedness); then the standard Boussinesq equations reads:
\begin{equation}\label{standard-bunsq}
\left\{
\begin{array}{lcl}
\displaystyle\partial_t\zeta+\partial_x\big( (1+\eps\zeta )v \big)=0\vspace{1mm}\; ,\\
\displaystyle (  1 - \eps \frac{1}{3} \partial_x^2 )  \partial_t v + \partial_x\zeta + \varepsilon v\partial_x v  =\mathcal{O} (\eps^2) \; .
\end{array}
\right.
\end{equation}
Many strategies exist to study the water-wave problem especially by deriving equivalent models with better mathematical structure such as well-posedness, 
conservation of energy, solitary waves, or physical properties (see for instance \cite{BBM72,LPS2012,BCL2005,Chazel2007,MSZ2012,SX2012,SWX2017,Burtea2016-1,Burtea2016-2,Lannes2013,Saut-Li,SAUT20202627,saut2021}).
 It is worth noticing  that the well posed results for such model exist on a time scale of order $1/\sqrt{\eps}$ (methods based on dispersive estimate in \cite{Zakharov68}) 
 and $1/\eps$ (energy estimate method  in \cite{Lannes2013} ). A better precision is obtained when the $\mathcal{O}(\mu^2)$ 
 terms are kept in the equations: only $\mathcal{O}(\mu^3)$ terms are dropped. Following the work in a  series of papers on the extended Green-Naghdi equations
 \cite{Matsuno2015,Matsuno2016,KZI2018,KZI2021}, one may write the extended Boussinesq  equations by incorporating higher order dispersive effects as follows:
\begin{equation}\label{ex-boussinesq}
\left\{
\begin{array}{lcl}
\displaystyle\partial_t\zeta+\partial_x(hv)=0\vspace{1mm}\; ,\\
\displaystyle (  1+\eps\mathcal{T}[\zeta]+\eps^2\mathfrak{T} )\partial_t v + \partial_x\zeta+\eps  v \partial_x v   +\eps^2 \mathcal{Q}v =\mathcal{O} (\eps^3) \; ,
\end{array}
\right.
\end{equation}
where $h=1+\eps\zeta$ is the non-dimensionalised height of the fluid and we denote the three operators :
\begin{equation*}
\mathcal{T}[\zeta]w =-\frac{1}{3h}\partial_x\big((1+3\eps\zeta)\partial_xw\big),
\quad\mathfrak{T} w = -\frac{1}{45}\partial_x^4w ,  \quad\mathcal{Q}v = -\frac{1}{3}\partial_x\big(vv_{xx}-v_x^2\big) \; .
\end{equation*}


\subsection{Presentation of the results}
As mentioned before, we will first derive an extended Boussinesq  equations in the same way as the derivation of the extended Green-Naghdi equations: we will keep every terms up to the third order in $\eps$. This is done in the next section,  section \ref{model}. 
Section \ref{justification} is devoted to the full justification of the extended Boussinesq system. We will firstly,
in subsection \ref{quasilinear}, write the extended Boussinesq system in a quasilinear form. The linear analysis, performed in subsection \ref{linear-analysis} will permit by the
energy estimate method to state, in the subsection \ref{mainresults}, the main results of well-posedness, stability and convergence of the proposed extended Boussinesq system.

As for usual Green-Naghdi and Boussinesq model, we are interested in the construction of a solution as a solitary wave. 
We will prove in section \ref{solitary-approx} that the profile of this solitary wave is a solution of a 3rd order non linear ordinary differential equation, ODE. Thus, it seems
impossible to find an explicit form of this profile. Therefore, we will compute, using Matlab ODE solver \texttt{ode45}, an approximate profile.
We will compare the obtained solutions with the solutions of water-waves equations and find that this solution is a better approximation than the solution of the original
Green-Naghdi equation. 

Lastly, instead of finding an analytical exact solitary wave, we will find an explicit solution with correctors in section \ref{explicit-solitary}.


\subsection{Comments on the results.}
In this section we try to highlight the potential need of higher-ordered models and their benefits over the classical asymptotic ones.
Despite having a more complicated structure than classical models,
higher ordered models may still be considered simpler than the original full Euler system~\eqref{euler}. In fact, as opposed to the full Euler system, these high order models enjoy a reduced structure in terms of number of equations, unknown numbers and dimension space which make them more suitable for theoretical and numerical study. 
Moreover, higher order approximations may have similar well-posedness results as classcial ones on relevant time scales due to standard mathematical tools. Based on section \ref{justification} and previous works \cite{KZI2018,KZI2021} this can be concluded at least in the one-dimensional case. 
However, the advantage is obvious in terms of controlling the convergence precision of the approximation error with respect to Euler equations (see in particular Theorem \ref{convergence} of section \ref{justification}).\\
On the other hand, while the solitary wave profile cannot be derived explicitly for higher order approximations, the numerical solution fits the corresponding one of the original Euler system much better than classical models (as shown by figure \ref{SWcomp}). The numerical solution computation requires simple discretization of a third-order nonlinear ODE using Matlab \texttt{ode45} solver. Furthermore, it is noteworthy that by removing the $\varepsilon^2$ extended-Boussiseq ODE terms, the Green-Naghdi's ODE can be recovered.

\subsection{Notation.}
We denote by $C(\lambda_1, \lambda_2,...)$ a constant depending on the parameters 
$\lambda_1$, $\lambda_2$, ... and \emph{whose dependence on the $\lambda_j$ is always assumed to be nondecreasing}. The notation $a\lesssim b$ means that $a\leq Cb$, 
for some non-negative constant $C$ whose exact expression is of no importance (\emph{in particular, it is independent of the small parameters involved}).

We denote the $L^2$ norm $\vert\cdot\vert_{L^2}$ simply by $\vert\cdot\vert_2$. The inner product of any functions $f_1$
and $f_2$ in the Hilbert space $L^2(\R^d)$ is denoted by
$
(f_1,f_2)=\int_{\R^d}f_1(X)f_2(X) dX.
$ The space $L^\infty=L^\infty(\R^d)$ consists of all essentially bounded, Lebesgue-measurable functions
$f$ with the norm
$
\vert f\vert_{L^\infty}= \hbox{ess}\sup \vert f(X)\vert<\infty
$. 
We denote by $W^{1,\infty}(\R)=\big\lbrace f\in L^\infty,  f_x\in L^{\infty}\big\rbrace$ endowed with its canonical norm.

For any real constant $s$, $H^s=H^s(\R^d)$ denotes the Sobolev space of all tempered
distributions $f$ with the norm $\vert f\vert_{H^s}=\vert \Lambda^s f\vert_2 < \infty$, where $\Lambda^s$ 
is the pseudo-differential operator $\Lambda^s=(1-\partial_x^2)^{s/2}$.

For any functions $u=u(t,X)$ and $v(t,X)$
defined on $[0,T)\times\R^d$ with $T>0$, we denote the inner product, the $L^p$-norm and especially
the $L^2$-norm, as well as the Sobolev norm, with respect to the spatial variable, by $(u,v)=(u(\cdot,t),v(\cdot,t))$, $\vert u \vert_{L^p}=\vert u(\cdot,t)\vert_{L^p}$, $\vert u \vert_{L^2}=\vert u(\cdot,t)\vert_{L^2}$, 
and $ \vert u \vert_{H^s}=\vert u(\cdot,t)\vert_{H^s}$, respectively.

Let $C^k(\R^d)$ denote the space of $k$-times continuously differentiable functions.
For any closed operator $T$ defined on a Banach space $Y$ of functions, the commutator $[T,f]$ is defined by $[T,f]g=T(fg)-fT(g)$ with $f$, $g$ and $fg$ belonging to the domain of $T$.
\section{The higher-order/extended Boussinesq equations}\label{model}
When the surface elevation is of small amplitude, that is, when an assumption is made on the nonlinearity parameter, the extended Green-Naghdi equations \cite{Matsuno2015, Matsuno2016, KZI2018, KZI2021} can be greatly simplified. Based on this, the extended Boussinesq with $\eps\sim\mu$ reads for one-dimensional small amplitude surfaces:
\begin{equation}\label{original-bous}
\left\{
\begin{array}{lcl}
\displaystyle\partial_t\zeta+\partial_x(hv)=0\vspace{1mm}\; ,\\
\displaystyle (  h +\eps\mathcal{T}[h]+\eps^2\mathfrak{T} )\partial_t v + h\partial_x\zeta+\eps h v \partial_x v   +\eps^2 \mathcal{Q}v =\mathcal{O} (\eps^3) \; ,
\end{array}
\right.
\end{equation}
where the right-hand side is of order $\eps^3$, and we see the dependence on $\eps^2$ in the left-hand side. Here $h=1+\eps\zeta$ and we denote by 
\begin{equation*}
\mathcal{T}[h]w =-\frac{1}{3}\partial_x\big( h^3 \partial_xw\big) \; , \qquad\mathfrak{T} w = -\frac{1}{45}\partial_x^4w \; ,  \qquad\mathcal{Q}v = -\frac{1}{3}\partial_x\big(vv_{xx}-v_x^2\big) \; .
\end{equation*}
\begin{remark}
Some of the components in the second equation's left-most term are of the size $\mathcal{O}(\eps^3)$. They were kept to preserve the operator's $\Im= h+\eps\mathcal{T}[h]-\eps^2\partial_x^4$ good properties; otherwise, these properties would have been disrupted  (see section \ref{invert-op}).
\end{remark}
\subsection{The modified system.} 
First of all, let us factorize all higher order derivatives (third and fifth) in the left-most term of the above system \eqref{original-bous}. In fact, we only have to factorize third-order derivatives 
and this is possible by setting $\pm\eps^2\mathcal{T}[h](vv_x)$ in the second equation. An inconvenient feature appears in this left-most term due to the positive sign in front of the 
elliptic forth-order linear operator $\mathfrak{T}$ which ravel the way towards well-posedness using energy estimate method. This obviously affect the invertibility of the factorized 
operator as we will see in section \ref{invert-op}. For this reason we proceed as in \cite{KZI2018,KZI2021} by using a $BBM$ trick represented in the following approximate 
equation $\partial_tv +\eps vv_x= -\zeta_x+O(\eps)$ to overcome this difficulty.

At this stage, it is noteworthy that from \cite{KZI2018,KZI2021} one may conclude directly the well-posedness results for such system but when the effect of surface tension is taken into consideration, the existence time scale is up to order $1/\eps$. 
This presence of the surface tension was essential for controlling higher order derivatives yielding from the BBM trick (see remarks in \cite{KZI2021}). 
In our case, the surface tension is neglected and thus we have to do proceed differently. 
The idea is to replace the capillary terms by a vanishing term $\pm\eps^2\zeta_{xxx}$ which will play a similar role. 
The term with a negative sign is used for a convenient definition of the energy space (see Definition \ref{defispace}) in such a way that the other term can be controlled. 
As a consequence, the existence time will get smaller with respect to the case of surface tension presence, \textit{i.e.} the time scale reached is up to order $1/\sqrt{\eps}$. In view of the above notes (we refer to remarks \ref{rem1} and \ref{rem2} for more details), the modified system reads:
\begin{equation}\label{boussinesq}
\left\{
\begin{array}{lcl}
\displaystyle\partial_t\zeta+\partial_x(hv)=0\vspace{1mm}\; ,\\
\displaystyle (  h+\eps\mathcal{T}[h]-\eps^2\mathfrak{T} ) \big(\partial_t v+\varepsilon vv_x\big) + h\partial_x\zeta- \eps^{2} \zeta_{xxx}+\frac{2}{45}\eps^2 \zeta_{xxxxx}    + \eps^{2} \zeta_{xxx}+\eps^2 \mathcal{Q}[U]v_x  =\mathcal{O} (\eps^3) \; ,
\end{array}
\right.
\end{equation}
where $U=(\zeta,v)$, $h(t,x)=1+\eps\zeta(t,x)$ and denote by
\begin{equation}\label{exp1}
\mathcal{T}[ h ]w =-\frac{1}{3}\partial_x( h^3 \partial_xw), \qquad\qquad \mathfrak{T} w = -\frac{1}{45}\partial_x^4w ,  \qquad\qquad  \mathcal{Q}[U]f = \frac{2}{3}\partial_x\big(v_xf\big) \; .
\end{equation}
\begin{remark}
An equivalent formulation of system~\eqref{boussinesq} has been numerically studied recently in~\cite{LG2021}. This formulation is obtained by dividing the second equation of system~\eqref{boussinesq} by the water height function, $h$ and removing time dependency from the left-most factorized operator while keeping the same precision of the model. During the numerical computations this operator  has to be inverted at each time step so one can solve system~\eqref{boussinesq}. The time dependency has to be amended in order to reduce the computational time.
\end{remark}
We state here that the solution of~\eqref{Zakharovv} is also a solution to the extended Boussinesq system \eqref{boussinesq} up to terms of order $\mathcal{O}(\eps^3)$.
\begin{proposition}[Consistency]\label{consistency}
Suppose that the full Euler system \eqref{Zakharovv} has a family of solutions $U^{euler}=(\zeta,\psi)^T$ such that there exists $T > 0$, $s>3/2$ for which $(\zeta,\psi' )^T$ is bounded in $L^{\infty}([0; T);H^{s+N})^2$ with N sufficiently large, uniformly with respect to $\eps\in(0,1)$. Define $v$ as in \eqref{defvelocity}. Then $(\zeta,v)^T$ satisfy \eqref{boussinesq} up to a remainder $R$, bounded by 
\begin{equation}\label{R}
\Vert R\Vert_{(L^{\infty}[0,T[;H^s)}\le \eps^3 C \; ,
\end{equation}
where $C=C(h_{min}^{-1}, \Vert \zeta \Vert_{L^{\infty}([0,T[;H^{s+N})}, \Vert \psi' \Vert_{L^{\infty}([0,T[;H^{s+N})})$ .
\end{proposition}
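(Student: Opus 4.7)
The plan is to start from the Zakharov/Craig--Sulem formulation \eqref{Zakharovv} with the averaged velocity $v$ defined by \eqref{defvelocity}, and work down to \eqref{boussinesq} by inserting suitably sharp asymptotic expansions of the Dirichlet--Neumann operator $\mathcal{G}_\mu[\eps\zeta]\psi$ (recalling that the regime $\eps \sim \mu$ collapses the two small parameters into one). The first equation of \eqref{boussinesq} is essentially free: by the variational characterization of $\varphi$, one has the exact identity $\mathcal{G}_\mu[\eps\zeta]\psi = -\mu\,\partial_x(hv)$, so the kinematic equation $\partial_t\zeta - \tfrac{1}{\mu}\mathcal{G}_\mu[\eps\zeta]\psi = 0$ becomes $\partial_t\zeta + \partial_x(hv) = 0$ with zero remainder. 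All the work is therefore in the second equation.

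For the momentum equation, I would differentiate the Bernoulli equation of \eqref{Zakharovv} in $x$ to obtain an equation for $\partial_x\psi$, then substitute the higher-order expansion of $\mathcal{G}_\mu[\eps\zeta]\psi$ to relate $\partial_x\psi$ and $v$ up to an $\mathcal{O}(\mu^3) = \mathcal{O}(\eps^3)$ remainder. This expansion, available in the form established in \cite{Matsuno2015, Matsuno2016, KZI2018, KZI2021} (and in Lannes's monograph \cite{Lannes2013}), is where the large index $N$ in the hypothesis is consumed: each asymptotic order costs a fixed number of derivatives, and controlling the tail in $H^s$ requires $\zeta,\psi'\in H^{s+N}$. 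Rearranging and regrouping the resulting terms according to the operators $\mathcal{T}[h]$, $\mathfrak{T}$ and $\mathcal{Q}$ yields exactly the "original" extended Boussinesq form \eqref{original-bous}, with a remainder $R_1$ that is $\mathcal{O}(\eps^3)$ in $L^\infty_tH^s_x$, the constant depending on $h_{\min}^{-1}$ and the $L^\infty_tH^{s+N}_x$ norms of $(\zeta,\psi')$ through tame product estimates on the operator expansion.

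Next I would account for the passage from \eqref{original-bous} to the modified system \eqref{boussinesq}. Two operations are involved, and both preserve $\mathcal{O}(\eps^3)$ consistency. First, the factorization through the operator $h+\eps\mathcal{T}[h]-\eps^2\mathfrak{T}$ applied to $(\partial_tv+\eps vv_x)$: the extra cross-term $-\eps^3\mathfrak{T}(vv_x)$ generated by the factorization is trivially $\mathcal{O}(\eps^3)$, while the discrepancy coming from replacing $+\eps^2\mathfrak{T}$ by $-\eps^2\mathfrak{T}$ is absorbed by the compensating fifth-order term $\tfrac{2}{45}\eps^2\zeta_{xxxxx}$ (one uses the approximation $\partial_tv = -\partial_x\zeta + \mathcal{O}(\eps)$ inside an $\eps^2$-prefactor, which costs only $\mathcal{O}(\eps^3)$). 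Second, the symmetric addition $\pm\eps^2\zeta_{xxx}$ is an algebraic identity and adds nothing. Collecting these contributions gives \eqref{boussinesq} with a total remainder $R=R_1+R_2$ satisfying the estimate \eqref{R}.

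The main obstacle I anticipate is the precise accounting in the second step: the $\mathcal{O}(\eps^3)$ bookkeeping has to be done in the $H^s$ norm, which forces one to estimate each replacement (DN expansion tail, BBM-type substitution of $\partial_tv$ by $-\zeta_x-\eps vv_x$ inside $\eps^2$-prefactored operators, commutators of $\mathcal{T}[h]$ and $\mathfrak{T}$ with nonlinear functionals of $\zeta$) using Moser/tame product estimates in Sobolev spaces. The regularity index $N$ then has to be chosen large enough to absorb all the derivatives lost in the operator expansions and commutator identities; this is the only delicate, but standard, point of the argument, and is the reason the hypothesis only specifies $N$ "sufficiently large" rather than an explicit value.
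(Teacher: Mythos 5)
Your plan is correct and follows essentially the same route as the paper: the first equation is exact via the identity $\tfrac{1}{\mu}\mathcal{G}_\mu[\eps\zeta]\psi=-\partial_x(hv)$, and the second is obtained by differentiating the Bernoulli equation and inserting the expansion of $\psi'$ in terms of $v$ (the paper quotes this expansion, with $H^s$ remainder bounds à la Lannes, directly from \cite{KZI2018}), with the BBM substitution and the vanishing $\pm\eps^2\zeta_{xxx}$ terms accounted for exactly as you describe. The only difference is one of presentation: you spell out the passage from \eqref{original-bous} to \eqref{boussinesq} that the paper leaves implicit.
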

\begin{proof}
Equation one of \eqref{boussinesq} exactly coincides with that of \eqref{Zakharovv}. It remains to check that the second equation is satisfied up to a remainder $R$ such that \eqref{R} holds. For this sake,
we need an asymptotic expansion of $\psi'$ in terms of $v$ which can be deduced from the work done in \cite{KZI2018} as follows :
\begin{equation}\label{psi'}
\psi'= v -\frac{1}{3}\eps\partial_x\big((1+3\eps\zeta)v_x\big) + \eps^2\frac{1}{3}\zeta\partial_x^2v + \eps^2\mathfrak{T} v + \eps^3R_{3}^{\eps} \; .
\end{equation}
Now we proceed iusing the same arguments as the ones used in Lemmas 5.4 and 5.11 in \cite{Lannes2013} to give some control on $R_3^{\eps}$ as follows :
\begin{equation}\label{control-of-R3}
\vert R_3^{\eps}\vert_{H^s}\le C(h_{min}^{-1}, \vert\zeta\vert_{H^{s+6}}) \vert \psi'\vert_{H^{s+6}} \qquad\text{ and }\qquad \vert \partial_t R_3^{\eps}\vert_{H^s}\le C(h_{min}^{-1}, \vert\zeta\vert_{H^{s+8}}, \vert \psi'\vert_{H^{s+8}} ) \; .
\end{equation}
Then we take the derivative of the second equation of \eqref{Zakharovv} and substitute $\mathcal{G}[\varepsilon\zeta]\psi$ and $\psi'$ by $ - \eps \partial_x (hv) $ and \eqref{psi'} respectively. 
Therefore, taking advantage of the estimates  \eqref{control-of-R3} provides the control of all terms of order $\eps^3$ as in \eqref{R} with $N$ large enough (mainly greater than $8$).
\end{proof}
\section{Full justification of the extended Boussinesq system $(\mu^3<\mu^2<\mu\ll1, \eps\sim\mu)$}\label{justification}
The two main issues regarding the validity of an asymptotic model are the following:
\begin{itemize}
\item Are the Cauchy problems for both the full Euler system and the asymptotic model well-posed for a given class of initial data, and over the relevant time scale ?
\item Can the water waves solutions be compared to the solutions of the full Euler system when corresponding initial data are close? If yes, can we estimate how close they are?
\end{itemize}
When an asymptotic model answer these two questions, it is said to be fully justified. 
In the sequel, after the linear analysis of our model, we refer to section \ref{mainresults} to state the answers of these questions. 
Existence and uniqueness of our solution on a time scale $1/\sqrt{\eps}$ is given by Theorem \ref{localexistence}, while a stability property is provided by Theorem \ref{stability}. 
Finally, the convergence Theorem \ref{convergence} is stated and therefore the full justification of our model is proved.

Let us firstly state some preliminary results in the section below.

\subsection{Properties of the two operators $\Im$ and $\Im^{-1}$.}\label{invert-op}
Assume the nonzero-depth condition that underline the fact that the height of the liquid is always confined, \textit{i.e.} : 
\begin{equation}\label{depthcond}
\exists\quad h_{min} >0, \qquad \inf_{x\in \R} h\ge h_{min} \;\quad  \text{  where  } \; \quad h(t,x)=1+\varepsilon\zeta(t,x) \; .
\end{equation}
Under the above condition, let us introduce the operator $\Im$, where much of the modifications in the previous section  hinges on it, such as:
\begin{equation}\label{op-I}
\Im = h+\eps\mathcal{T}[h]-\eps^2\mathfrak{T} =h-\frac{1}{3}\eps\partial_x( h^3 \partial_x\cdot) +\frac{1}{45}\eps^2\partial_x^4\cdot \; .
\end{equation}
The following lemma states the invertibility results of the operator $\Im$ on well chosen functional spaces.
\begin{lemma}\label{lema1}
Suppose that the depth condition (\ref{depthcond}) is satisfied by the scalar function $\zeta(t,\cdot)\in L^{\infty}(\R)$. Then, the operator
$$
\Im\colon H^4(\R)\longrightarrow L^2(\R)
$$
is well defined, one-to-one and onto .
\end{lemma}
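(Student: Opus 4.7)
The plan is to view $\Im$ as a uniformly elliptic fourth-order operator and to reduce the invertibility question to the Lax--Milgram theorem applied to the natural symmetric bilinear form obtained by pairing $\Im u$ with a test function and integrating by parts twice, namely
$$a(u,v) = \int_{\R} h\, u v \, dx + \frac{\eps}{3}\int_{\R} h^3 u_x v_x \, dx + \frac{\eps^2}{45} \int_{\R} u_{xx} v_{xx} \, dx.$$
Well-definedness of $\Im\colon H^4(\R) \to L^2(\R)$ follows by expanding $\Im u = hu - \eps h^2 h_x u_x - \tfrac{\eps}{3} h^3 u_{xx} + \tfrac{\eps^2}{45} \partial_x^4 u$ and checking that each summand lies in $L^2$ under the mild (Lipschitz) regularity on $h = 1 + \eps\zeta$ implicitly carried from the ambient assumptions on $\zeta$.

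Injectivity is immediate from the variational setup: if $\Im u = 0$ with $u \in H^4$, pairing with $u$ in $L^2$ and integrating by parts gives $a(u,u) = 0$, and since $a(u,u) \ge h_{min} |u|_2^2$ by the depth condition \eqref{depthcond}, we conclude $u \equiv 0$.

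For surjectivity I would first solve the variational problem in $H^2(\R)$. Continuity of $a$ on $H^2 \times H^2$ is straightforward from Cauchy--Schwarz using $h \in L^\infty$, while coercivity reads
$$a(u,u) \ge h_{min} |u|_2^2 + \frac{\eps \, h_{min}^3}{3}|u_x|_2^2 + \frac{\eps^2}{45}|u_{xx}|_2^2 \ge C(\eps, h_{min}) |u|_{H^2}^2,$$
the quartic dispersion term being precisely what supplies coercivity on the top-order piece. Lax--Milgram then yields, for every $f \in L^2$, a unique $u \in H^2$ with $a(u,v) = (f,v)$ for all $v \in H^2$, i.e.\ $\Im u = f$ in the sense of distributions. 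The final step is to bootstrap this weak solution to $H^4$ by rewriting the identity as $\tfrac{\eps^2}{45}\partial_x^4 u = f - hu + \tfrac{\eps}{3}\partial_x(h^3 u_x)$, whose right-hand side lies in $L^2$, forcing $u \in H^4$.

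The main difficulty I anticipate is this regularity bootstrap from $H^2$ to $H^4$: the Lipschitz character of $h$ is what makes $\partial_x(h^3 u_x)$ a genuine $L^2$ function rather than merely an $H^{-1}$ distribution, so some care is needed to verify that the regularity of $\zeta$ carried implicitly from the model is in fact sufficient. Tracking how the coercivity constant $C(\eps, h_{min})$ -- and hence the norm of $\Im^{-1}$ -- degenerates as $\eps \to 0$ will also be important downstream, since the energy estimates for the extended Boussinesq system later require uniform-in-$\eps$ control of these inverses.
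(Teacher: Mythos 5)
The paper does not actually prove this lemma: it defers entirely to \cite[Lemma 1]{KZI2018} and \cite[Lemma 1]{KZI2021}, and the argument in those references is precisely the variational one you give --- Lax--Milgram for the coercive symmetric form $a(u,v)$ on $H^2(\R)$, followed by an elliptic bootstrap to $H^4(\R)$ using the $\partial_x^4$ term. So your proof is correct in substance and matches the intended one; your injectivity and surjectivity steps are sound, and the interpolation $u\in H^2$, $\partial_x^4u\in L^2\Rightarrow u\in H^4$ closes the bootstrap in one dimension. Two remarks. First, the caveat you flag is real: with only $\zeta(t,\cdot)\in L^{\infty}(\R)$, as in the lemma's statement, the divergence-form term $\partial_x(h^3u_x)$ is merely an $H^{-1}$ distribution, so neither the well-definedness of $\Im\colon H^4\to L^2$ nor the bootstrap goes through literally; one needs some control on $h_x$ (say $\zeta\in W^{1,\infty}$), which is harmless in every use the paper makes of the lemma since there $\zeta\in H^{s+2}$ with $s>3/2$, but it is a looseness in the statement rather than in your argument. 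Second, the degeneration of the coercivity constant as $\eps\to 0$ that you anticipate is exactly what the paper's Lemma \ref{lemma2} handles downstream, by stating the bounds on $\Im^{-1}$, $\Im^{-1}\partial_x$ and $\Im^{-1}\partial_x^2$ with the weights $1$, $\sqrt{\eps}$ and $\eps$ respectively, so that all constants remain uniform in $\eps\in(0,1)$.
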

\begin{proof}
We refer to the recent works of two of the authors, \cite[Lemma 1]{KZI2018} and \cite[Lemma 1]{KZI2021},  for the proof of this lemma.
\end{proof}
Some functional properties on the operator $\Im^{-1}$ are given by the Lemma below.
\begin{lemma}\label{lemma2}
Let $t_0>\frac{1}{2}$ and $\zeta\in H^{t_0+1}(\R)$ be such that (\ref{depthcond}) is satisfied. Then, we have the following$\colon$
\begin{enumerate}
\item[(i)] For all $0\leq s\leq t_0+1$, it holds
$$
\vert \Im^{-1}f\vert_{H^s}+\sqrt{\eps}\vert\partial_x \Im^{-1}f\vert_{H^s}+\eps\vert\partial_x^2 \Im^{-1}f\vert_{H^s}\leq C\big(\frac{1}{h_{min}},\vert h-1\vert_{H^{t_0+1}}\big)\vert f\vert_{H^{s}} \; .
$$
and
$$
\sqrt{\eps}\vert \Im^{-1}\partial_xf\vert_{H^s} + \eps\vert \Im^{-1}\partial_x^2f\vert_{H^s}  \leq C\big(\frac{1}{h_{min}},\vert h-1\vert_{H^{t_0+1}}\big)\vert f\vert_{H^{s}} \; .
$$
\item[(iii)] For all $s\geq t_0+1$, it holds
$$
\Vert\Im^{-1}\Vert_{H^s(\R)\rightarrow H^s(\R)}+\sqrt{\eps}\Vert\Im^{-1}\partial_x\Vert_{H^s(\R)\rightarrow H^s(\R)}+\eps\Vert\Im^{-1}\partial_x^2\Vert_{H^s(\R)\rightarrow H^s(\R)}\leq C_s \; ,
$$
and
$$
\sqrt{\eps}\Vert  \Im^{-1}  \partial_x \Vert_{H^s(\R)\rightarrow H^s(\R)}+ \eps \Vert \Im^{-1}\partial_x^2\Vert_{H^s(\R)\rightarrow H^s(\R)}\leq C_s \; ,
$$
\end{enumerate}
where $C_s$ is a constant depending on $1/h_{min}$ , $\vert h-1\vert_{H^s}$ and independent of $\eps\in(0,1)$.
\end{lemma}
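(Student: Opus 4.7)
The backbone of the proof is the coercivity of the bilinear form attached to $\Im$. Integrating by parts and using the depth condition $h \geq h_{min}$, one gets for any $u \in H^2(\R)$,
\begin{equation*}
(\Im u, u) = \int_{\R} h u^2 + \frac{\eps}{3}\int_{\R} h^3 (\partial_x u)^2 + \frac{\eps^2}{45}\int_{\R}(\partial_x^2 u)^2 \geq c_0\bigl(|u|_2^2 + \eps|\partial_x u|_2^2 + \eps^2|\partial_x^2 u|_2^2\bigr),
\end{equation*}
where $c_0$ depends only on $h_{min}$. Pairing the identity $\Im u = f$ with $u$ and invoking Cauchy--Schwarz then yields the $s=0$ version of the first bound in (i) for $u = \Im^{-1}f$, namely $|u|_2 + \sqrt{\eps}|\partial_x u|_2 + \eps|\partial_x^2 u|_2 \lesssim |f|_2$.

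To reach $0 < s \leq t_0+1$, I would apply $\Lambda^s = (1-\partial_x^2)^{s/2}$ to $\Im u = f$ and commute: $\Im(\Lambda^s u) = \Lambda^s f + [\Im,\Lambda^s] u$. The fourth-order piece $-\eps^2\mathfrak{T}$ is a Fourier multiplier and commutes with $\Lambda^s$, so the commutator reduces to $[h,\Lambda^s] - \tfrac{\eps}{3}\partial_x([h^3,\Lambda^s]\partial_x\cdot)$. Using the Kato--Ponce estimate
\begin{equation*}
|[\Lambda^s, f]g|_2 \lesssim |\partial_x f|_{H^{t_0}}|g|_{H^{s-1}},\qquad t_0 > 1/2,
\end{equation*}
and the fact that $\partial_x h = \eps\partial_x\zeta$, each commutator term carries an explicit factor of $\eps$ after an integration by parts when paired with $\Lambda^s u$. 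Coercivity applied to $\Lambda^s u$ then produces
\begin{equation*}
c_0\bigl(|u|_{H^s}^2 + \eps|\partial_x u|_{H^s}^2 + \eps^2|\partial_x^2 u|_{H^s}^2\bigr) \leq |f|_{H^s}|u|_{H^s} + C\eps\,|u|_{H^s}^2 + C\eps^2|u|_{H^s}|\partial_x u|_{H^s},
\end{equation*}
with $C = C(h_{min}^{-1}, |h-1|_{H^{t_0+1}})$. The last two terms are absorbed into the left-hand side for $\eps\in(0,1)$ (using Young on the cross-term), which yields the first inequality of (i).

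The second inequality of (i), controlling $\Im^{-1}\partial_x$ and $\Im^{-1}\partial_x^2$, I would derive by duality: since $\Im$ is symmetric on $L^2$, one has $(\Im^{-1}\partial_x f, g) = -(f, \partial_x\Im^{-1} g)$, and the already-proved bound $|\partial_x\Im^{-1}g|_{H^s} \lesssim \eps^{-1/2}|g|_{H^s}$ yields $\sqrt{\eps}|\Im^{-1}\partial_x f|_{H^s}\lesssim |f|_{H^s}$; the $\partial_x^2$ version is analogous (either two duality steps or using $(\Im^{-1}\partial_x^2 f,g)=(f,\partial_x^2\Im^{-1}g)$ together with the $\eps|\partial_x^2\Im^{-1}g|$ bound).

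For part (iii) the same strategy applies, but now for $s \geq t_0+1$ the algebra structure of $H^s(\R)$ makes the commutator estimate cleaner: Kato--Ponce gives $|[\Lambda^s,h]u|_2 \lesssim |\partial_x h|_{H^{s-1}}|u|_{L^\infty} + |\partial_x h|_{L^\infty}|u|_{H^{s-1}} \lesssim \eps|\zeta|_{H^s}|u|_{H^s}$ (and similarly for $h^3$ after a Moser bound), so the constants are replaced by $C_s$ depending on $|h-1|_{H^s}$ and the argument closes uniformly in $\eps$. The main technical obstacle throughout is the careful bookkeeping of $\eps$-weights in the commutators: one must show they are all of order $\eps$ or higher so that they can be absorbed by the coercive left-hand side, which is what makes the constants in the lemma $\eps$-independent.
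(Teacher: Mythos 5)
The paper itself does not prove this lemma: it simply refers to \cite[Lemma 2]{KZI2018} and \cite[Lemma 2]{KZI2021}, where the argument is precisely the coercivity/energy method you describe. So your proposal is not a different route; it is a reconstruction of the proof the paper delegates to its references, and the backbone is sound: the identity $(\Im u,u)=\int h u^2+\frac{\eps}{3}\int h^3(\partial_x u)^2+\frac{\eps^2}{45}\int(\partial_x^2 u)^2$ is correct, the depth condition gives the stated coercivity, and the commutator bookkeeping (the factor $\eps$ extracted from $\partial_x h=\eps\partial_x\zeta$, absorption by Young's inequality) is exactly how the $\eps$-uniformity of the constants is obtained in the cited works.

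One step as written does not close, though it is easily repaired. For the second inequality of (i) you invoke duality: $(\Im^{-1}\partial_x f,g)=-(f,\partial_x\Im^{-1}g)$ together with the bound on $\partial_x\Im^{-1}$. To convert pairings against $g$ into an $H^s$ norm of $\Im^{-1}\partial_x f$ you must let $g$ range over the unit ball of $H^{-s}$ (or take $g=\Lambda^{2s}\Im^{-1}\partial_x f$), and then the bound you need is $\vert\partial_x\Im^{-1}g\vert_{H^{-s}}\lesssim\eps^{-1/2}\vert g\vert_{H^{-s}}$ --- an estimate in negative-index Sobolev spaces that part (i) does not provide. The standard fix, and the one used in the references, is direct: set $u=\Im^{-1}\partial_x f$, pair $\Lambda^s\Im u=\Lambda^s\partial_x f$ with $\Lambda^s u$, and integrate by parts on the right-hand side to get $(\Lambda^s\partial_x f,\Lambda^s u)=-(\Lambda^s f,\Lambda^s\partial_x u)\le\vert f\vert_{H^s}\vert\partial_x u\vert_{H^s}$; coercivity then gives $\eps\vert\partial_x u\vert_{H^s}^2\lesssim\vert f\vert_{H^s}\vert\partial_x u\vert_{H^s}$, hence $\vert\partial_x u\vert_{H^s}\lesssim\eps^{-1}\vert f\vert_{H^s}$ and in turn $\vert u\vert_{H^s}^2\lesssim\vert f\vert_{H^s}\vert\partial_x u\vert_{H^s}\lesssim\eps^{-1}\vert f\vert_{H^s}^2$, i.e.\ $\sqrt{\eps}\vert\Im^{-1}\partial_x f\vert_{H^s}\lesssim\vert f\vert_{H^s}$; the $\partial_x^2$ case is identical with two integrations by parts. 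With that substitution your outline matches the proof the paper relies on, including the passage to $s\ge t_0+1$ via the tame product and commutator estimates.
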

\begin{proof}
We refer to the recent works of two of the authors, \cite[Lemma 2]{KZI2018} and \cite[Lemma 2]{KZI2021},  for the proof of this lemma.
\end{proof}
\subsection{Quasilinear form.} \label{quasilinear}
In order to rewrite the extended Boussinesq system  in a condensed form and for the sake of clarity, let us introduce an elliptic  forth-order operator $T[h]$  as follows:
\begin{equation}\label{J}
 T[h] (\cdot)  = h-\eps^2\partial_x^2 (\cdot ) + \frac{2}{45}\eps^2\partial_x^4 (\cdot)   \; .
\end{equation}
The first equation of the system \eqref{boussinesq} can be written as follows:
$$
\partial_t\zeta +\eps v\partial_x\zeta+h\partial_xv = 0 .
$$
Then we apply $\Im^{-1}$ to both sides of the second equation of the system \eqref{boussinesq}, to get:
\begin{equation*}
\partial_tv+\eps vv_x+\Im^{-1}\big(T[h] \zeta_x\big) + \eps^2\Im^{-1}\big(\partial_x^2\zeta_{x}\big)
+\eps^2\Im^{-1}\big(\mathcal{Q}[U]v_x\big) = \mathcal{O} (\eps^3) \; .
\end{equation*}
Hence the higher order Boussinesq system  can be written under the form:
\begin{equation}\label{nonlinear}
\partial_tU+A[U]\partial_xU = 0 \; ,
\end{equation}
where the operator $A$ is denied by:
\begin{equation}\label{AU}
A[U]=\left(
\begin{array}{cc}
\varepsilon v &h\\
\Im^{-1}\big(T[h] \cdot\big) + \eps^2 \Im^{-1}\big(\partial_x^2\cdot\big)& \varepsilon v+\varepsilon^2\Im^{-1}\big(\mathcal{Q}[U]\cdot\big)
\end{array}
\right) \; .
\end{equation}


\subsection{Linear analysis.}\label{linear-analysis}
We consider the following linearized system around a reference state $\underline{U}=(\underline{\zeta},\underline{v})^T$:
\begin{equation}\label{LGN}
	\left\lbrace
	\begin{array}{l}
	\dsp\partial_t U+A[\underline{U}]\partial_x U=0\vspace{1mm},
        \\
	\dsp U_{\vert_{t=0}}=U_0.
	\end{array}\right.
\end{equation}
The energy estimate method needs to  define a suitable energy space for the problem we are considering here. This will permit the convergence of an iterative scheme to construct a solution to the extended Boussinesq 
system \eqref{boussinesq}  for the initial value problem (\ref{LGN}).

\begin{definition}[Energy space]\label{defispace}
 For all $s\ge 0$ and $T>0$, we denote by $X^s$ the vector space $H^{s+2}(\R)\times H^{s+2}(\R)$ endowed with the norm:
\begin{eqnarray*}
\textrm{ for } U=(\zeta,v) \in X^s \,,\, \vert U\vert^2_{X^s}&:=& \vert \zeta\vert^2 _{H^s}+ \eps^2\vert\zeta_x\vert_{H^s}^2+\eps^2\vert \zeta_{xx}\vert^2 _{H^s}+\vert v\vert^2 _{H^s}+\eps\vert v_{x}\vert_{H^{s}}^2+\eps^2\vert v_{xx}\vert_{H^{s}}^2 \; .
\end{eqnarray*}
$X^s_T$ stands for $C([0,\frac{T}{\sqrt{\eps}}];X^{s})$ endowed with its canonical norm.
\end{definition}
\begin{remark}\label{rem2}
It is worth noticing that in the presence of surface tension the second term of the energy norm, $\vert \zeta_x\vert _{H^s}^2$, is controlled by $\eps$ in front of it and this is sufficiently enough to give an existence time scale of order $1/\eps$. 
In fact, the second term here in $\vert \cdot\vert _{X^s}$ is due to the consideration of the vanishing term that is important for Definition \ref{defispace} itself and for controlling higher order terms (see Proposition \ref{prop1}).
\end{remark}
Now we remark that a good suggestion of a pseudo-symmetrizer for $A[\underline{U}]$ requires firstly the introduction of a forth-order linear operator $J[h]$ as follows:
$$
 J[h](\cdot) =1- \eps ^2 \partial_x\big(h^{-1}\partial_x\cdot\big)+\frac{2}{45}\eps^2\partial_x^2\big(h^{-1}\partial_x^2\cdot\big) \; ,
$$
where $\underline{h}=1+\varepsilon\underline{\zeta}$ . Thus a pseudo-symmetrizer for $A[\underline{U}]$ is given by:
\begin{equation}\label{pseudo-symmetrizer}
S=\left( 
\begin{array}{cc}
J[ \underline{h}] & 0 \\\\ 
0& \underline{\Im}
\end{array}
\right)=\left( 
\begin{array}{cc}
1- \eps ^2 \partial_x\big(\underline{h}^{-1}\partial_x\cdot\big)+\frac{2}{45}\eps^2\partial_x^2\big(\underline{h}^{-1}\partial_x^2\cdot\big)  & 0 \\\\ 
0& \underline{h}+\eps \mathcal{T}[\underline{h}]-\eps^2\mathfrak{T}
\end{array}
\right) \; .
\end{equation}
\begin{remark}\label{rem1}
Introducing operator $J[h]$ is of great interest for defining a suitable pseudo-symmetrizer for \eqref{AU}. 
As the higher order derivative in $T[h]$ is not multiplied by $h$ (if this was the case then the vanishing term considered might be $\pm \eps^2h\zeta_{xxx}$), therefore $J[h]$ must replace $T[h]$ in the first entity of \eqref{pseudo-symmetrizer}. This is clearly necessary for controlling $A_2+A_3$ (see Proposition \ref{prop1}).
\end{remark}
Also, a natural energy for the initial value problem (\ref{LGN}) is suggested to be as follows:
\begin{equation}\label{es}
 E^s(U)^2=(\Lambda^sU,S\Lambda^sU) \; .
\end{equation}
\begin{lemma}[Equivalency of $ E^s(U)$ and the $X^s$-norm]\label{lemmaes}
Let $s\geq 0$ and suppose that $ \underline{\zeta}\in L^{\infty}(\R)$ satisfies consition \eqref{depthcond}. Then norm $\vert \cdot\vert_{X^s}$ and the natural energy $E^s(U)$ are uniformly equivalent with respect to $\eps \in (0,1)$ such that:
$$
E^s(U) \leq C\big(h_{min}, \vert\underline{h}\vert_{\infty}\big)\vert U\vert_{X^s} \quad \text{ and } \quad
\vert U \vert_{X^s}\leq C\big(h_{min},\vert \underline{h}\vert_{\infty}\big) E^s(U).
$$
\end{lemma}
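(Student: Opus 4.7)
Since the pseudo-symmetrizer $S$ defined in \eqref{pseudo-symmetrizer} is block-diagonal, the natural energy splits as
\begin{equation*}
E^s(U)^2 = \bigl(\Lambda^s\zeta,\,J[\underline{h}]\Lambda^s\zeta\bigr) + \bigl(\Lambda^s v,\,\underline{\Im}\,\Lambda^s v\bigr),
\end{equation*}
so my plan is to treat the two scalar quadratic forms separately, expand each using the explicit formulae for $J[\underline{h}]$ and $\underline{\Im}=\underline{h}+\eps\mathcal{T}[\underline{h}]-\eps^2\mathfrak{T}$, integrate by parts to move derivatives off $\Lambda^s\zeta$ (resp.\ $\Lambda^s v$), and then bound the resulting nonnegative quadratic forms from above and below using the pointwise bounds $h_{min}\le \underline{h}(x)\le |\underline{h}|_{\infty}$ together with the obvious identity $|\partial_x^k \Lambda^s u|_2=|u|_{H^{s+k}}^{\,\prime}$ coming from the fact that $\Lambda^s$ and $\partial_x$ commute (both are Fourier multipliers).

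For the $\zeta$-part, integrating by parts twice yields
\begin{equation*}
\bigl(\Lambda^s\zeta,J[\underline{h}]\Lambda^s\zeta\bigr)=|\Lambda^s\zeta|_2^2+\eps^2\bigl(\underline{h}^{-1}\partial_x\Lambda^s\zeta,\partial_x\Lambda^s\zeta\bigr)+\tfrac{2}{45}\eps^2\bigl(\underline{h}^{-1}\partial_x^2\Lambda^s\zeta,\partial_x^2\Lambda^s\zeta\bigr).
\end{equation*}
All three terms are manifestly nonnegative, and the pointwise bound $1/|\underline{h}|_{\infty}\le \underline{h}^{-1}\le 1/h_{min}$ gives matching upper and lower bounds by $|\zeta|_{H^s}^2+\eps^2|\zeta_x|_{H^s}^2+\eps^2|\zeta_{xx}|_{H^s}^2$, up to constants depending only on $h_{min}$ and $|\underline{h}|_{\infty}$. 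For the $v$-part, integrating by parts in the $\mathcal{T}[\underline{h}]$ and $\mathfrak{T}$ terms gives
\begin{equation*}
\bigl(\Lambda^s v,\underline{\Im}\,\Lambda^s v\bigr)=\bigl(\underline{h}\,\Lambda^s v,\Lambda^s v\bigr)+\tfrac{1}{3}\eps\bigl(\underline{h}^{\,3}\partial_x\Lambda^s v,\partial_x\Lambda^s v\bigr)+\tfrac{1}{45}\eps^2|\partial_x^2\Lambda^s v|_2^2,
\end{equation*}
which is again a sum of three nonnegative terms controlled above and below by $|v|_{H^s}^2+\eps|v_x|_{H^s}^2+\eps^2|v_{xx}|_{H^s}^2$ with constants depending only on $h_{min}$ and $|\underline{h}|_{\infty}$.

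Adding the two contributions gives exactly the six terms appearing in the $X^s$-norm with the announced weights in $\eps$, and combining the two-sided bounds just obtained yields both inequalities of the lemma.

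There is no real obstacle here: the proof is a direct integration by parts plus pointwise control by the depth condition. The only subtlety worth noting is that $\Lambda^s$ does not commute with multiplication by $\underline{h}$, but this is irrelevant since $S$ is applied \emph{after} $\Lambda^s$, so $\underline{h}$ only appears as a pointwise multiplier inside the inner products and its $L^\infty$ bounds suffice. In particular no commutator estimate is needed, which is why the equivalence constant depends only on $h_{min}$ and $|\underline{h}|_{\infty}$ and not on any Sobolev norm of $\underline{h}$.
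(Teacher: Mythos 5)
Your proof is correct: the block-diagonal splitting, the integrations by parts (which never place a derivative on $\underline{h}$, so $\underline{\zeta}\in L^{\infty}$ with the depth condition \eqref{depthcond} is enough), and the pointwise two-sided bounds on $\underline{h}^{-1}$ and $\underline{h}^{3}$ reproduce exactly the six weighted terms of $\vert\cdot\vert_{X^s}$. The paper itself only cites \cite[Lemma 3]{KZI2018} for this statement, and the argument there is the same direct computation, so your writeup simply supplies the calculation the paper omits.
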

\begin{proof}
We refer to the recent work of two of the authors \cite[Lemma 3]{KZI2018} for the proof of this important property.
\end{proof}
The well-posedness and a derivation of a first energy estimate for the linear system is given in the following proposition.
\begin{proposition}[Well-posedness \& energy estimate of the linear system]\label{prop1}
For $t_0>\frac{1}{2}$, $s\geq t_0+1$ and under the depth condition (\ref{depthcond}), suppose that $\underline{U}=(\underline{\zeta}, \underline{v})^T$ $\in X^{s}_{T}$
 and $\partial_t \underline{U} \in X^{s-1}_{T}$ 
 at any time in $[0,\frac{T}{\sqrt{\varepsilon}}]$. Then, there exists a unique solution $U=(\zeta, v)^T$ $\in X^{s}_{T} $ to (\ref{LGN}) for any initial data $U_0$ in $X^s$ and for all $0\leq t\leq\frac{T}{\sqrt{\varepsilon}}$ it holds that:
\begin{equation}\label{energy}
\dsp E^s\big(U(t)\big)\displaystyle\leq \big(e^{\sqrt{\varepsilon}\lambda_{T} t}\big)^{1/2}E^s(U_0) \; ,
\end{equation}
for some $\lambda_{T}$ depending only on $ h_{min}^{-1},  \sup_{0\leq \sqrt{\eps} t\leq T}E^s(\underline{U}(t))$ and $\sup_{0\leq \sqrt{\eps}  t \leq T}\vert\partial_t\underline{h}(t) \vert_{L^{\infty}}$ .
\end{proposition}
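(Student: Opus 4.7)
The plan is to establish existence through a Friedrichs-type regularization of the linear system and to obtain the energy bound \eqref{energy} by the standard symmetrizer method, carefully accounting for the pseudo-differential structure coming from $\underline{\Im}^{-1}$. For existence and uniqueness I would replace $A[\underline{U}]\partial_x$ by a mollified version $J_\delta A[\underline{U}] \partial_x J_\delta$, solve the resulting $X^s$-valued ODE by Cauchy-Lipschitz, and pass to the limit $\delta \to 0$ using the uniform (in $\delta$) version of the energy estimate derived below; uniqueness then follows by applying the same estimate to the difference of two solutions.

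For the energy estimate itself, I would apply $\Lambda^s$ to \eqref{LGN} and pair with $S\Lambda^s U$ in $L^2$. Since $S$ is formally self-adjoint (both $J[\underline{h}]$ and $\underline{\Im}$ are, by inspection), this yields
\begin{equation*}
\frac{1}{2}\frac{d}{dt} E^s(U)^2 = \frac{1}{2}\bigl(\Lambda^s U,\,(\partial_t S)\Lambda^s U\bigr) - \bigl(S\Lambda^s(A[\underline{U}]\partial_x U),\,\Lambda^s U\bigr).
\end{equation*}
Splitting $\Lambda^s(A\partial_x U) = A\partial_x \Lambda^s U + [\Lambda^s, A]\partial_x U$, the principal term $(SA\partial_x \Lambda^s U,\Lambda^s U)$ is rearranged through integration by parts into $-\tfrac{1}{2}\bigl((\partial_x(SA))\Lambda^s U,\Lambda^s U\bigr) + \tfrac{1}{2}\bigl((SA - (SA)^*)\partial_x\Lambda^s U,\Lambda^s U\bigr)$. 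The commutator $[\Lambda^s, A]\partial_x U$ is handled by Kato--Ponce/Moser inequalities for the multiplicative parts ($\eps\underline{v}$, $\underline{h}$) and by Lemma \ref{lemma2} for the non-local pieces containing $\underline{\Im}^{-1}$. The term $(\partial_t S)\Lambda^s U$ only involves $\partial_t \underline{h}$, which is controlled by the hypothesis $\partial_t \underline{U} \in X^{s-1}_T$. Gathering all contributions, each derivative falling on $U$ can be reabsorbed against the $\sqrt{\eps}$ or $\eps$ weights built into $\vert \cdot\vert_{X^s}$ (Definition \ref{defispace}), giving
\begin{equation*}
\frac{d}{dt} E^s(U)^2 \leq \sqrt{\eps}\,\lambda_T\, E^s(U)^2,
\end{equation*}
so that Gronwall's lemma delivers \eqref{energy}. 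The norm equivalence of Lemma \ref{lemmaes} then transfers the control to the $X^s$-norm needed to conclude $U \in X^s_T$.

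The hard part will be the careful analysis of the asymmetry of $SA[\underline{U}]$. A direct computation gives the off-diagonal entries $(SA)_{12} = J[\underline{h}]\,\underline{h}$ and $(SA)_{21} = T[\underline{h}] + \eps^2 \partial_x^2 = \underline{h} + \tfrac{2}{45}\eps^2 \partial_x^4$; after taking adjoints they match only up to terms carrying $\partial_x \underline{h}$ and $\eps^2$-weighted fourth-order spatial derivatives. As flagged in Remark \ref{rem1}, this is precisely why the vanishing corrector $\pm\eps^2 \zeta_{xxx}$ had to be inserted in \eqref{boussinesq} and why the entry $\vert \zeta_x\vert_{H^s}$ carries weight $\eps^2$ rather than $\eps$ in the $X^s$-norm. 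The residual skew-adjoint piece is absorbable, but only at the cost of a single power $\sqrt{\eps}$ instead of $\eps$, which is exactly what forces the time scale $1/\sqrt{\eps}$ in the statement rather than the $1/\eps$ scale one would obtain in the presence of surface tension.
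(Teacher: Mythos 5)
Your proposal follows essentially the same route as the paper: existence via a standard regularization argument (the paper delegates this to the adaptation of \cite{Israwi2011}), and the energy estimate via the pseudo-symmetrizer $S$, the splitting of $\Lambda^s(A[\underline{U}]\partial_x U)$ into a principal part handled by integration by parts (exploiting the near-symmetry of $SA[\underline{U}]$, whose residual terms all carry factors of $\partial_x\underline{h}=\eps\underline{\zeta}_x$ or $\partial_x^2\underline{h}$) plus a commutator controlled by Kato--Ponce and Lemma \ref{lemma2}, with $[\partial_t,S]$ absorbed using $\partial_t\underline{U}\in X^{s-1}_T$ and Gr\"onwall concluding. You also correctly identify the crux --- the imperfect symmetry of the off-diagonal entries and the resulting single power of $\sqrt{\eps}$ that fixes the $1/\sqrt{\eps}$ time scale --- which is exactly the point the paper makes in its treatment of $A_2+A_3$, $A_4$, $B_1$ and $B_{31}$.
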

\begin{proof}
For the proof of the existence and uniqueness of the solution, we refer to the proof found in \cite[Appendix A]{Israwi2011} which can be directly adapted to the problem we are considering here. 

Thereafter, we will focus our attention on the proof of the energy estimate \eqref{energy}. First of all, fix $\lambda\in\R$. The proof of the energy estimate is centered on bounding from above by zero the expression
$
e^{\sqrt{\varepsilon}\lambda t}\partial_t(e^{-\sqrt{\varepsilon}\lambda t}E^s(U)^2).
$
For this sake, we use the fact that $\underline{\Im}$ and $J[\underline{h}]$ are symmetric to evaluate the expression under the form:
\begin{align*}
\frac{1}{2}e^{\sqrt{\varepsilon}\lambda t}\partial_t(e^{-\sqrt{\varepsilon}\lambda t}E^s(U)^2)&=-\frac{\lambda}{2} \sqrt{\varepsilon} E^s(U)^2 -\big(SA[\underline{U}]\Lambda^s\partial_x U,\Lambda^s U\big)- \big(\big[\Lambda^s,A[\underline{U}]\big] \partial_xU,S\Lambda^s U\big)\\
&\quad+\frac{1}{2}\big(\Lambda^s\zeta,[\partial_t,J[\underline{h}]]\Lambda^s\zeta\big)+\frac{1}{2}(\Lambda^sv,[\partial_t,\underline{\Im}]\Lambda^sv) \; .
\end{align*}
Now it remains to control the r.h.s components of the above equation. To do so, we firstly recall the commutator estimate we shall use due to Kato-Ponce \cite{KP88} and recently improved by Lannes \cite{Lannes2006}: in particular, for any $s>3/2$, and $ q \in H^s(\R),p\in H^{s-1}(\R)$, one has:
\begin{equation}\label{cest}
\big\vert [\Lambda^s, q]p\vert_{2} \lesssim \vert \nabla q\vert_{H^{s-1}}\vert p\vert_{H^{s-1}} \; .
\end{equation}
Also we shall use intensively the classical product estimate (see \cite{AG91,Lannes2006,KP88}): in particular, for any $p,q\in H^s(\R^2)$, $s>3/2$, one has:
\begin{equation}\label{mest}
\vert pq\vert_{H^s}\lesssim \vert q\vert_{H^s}\vert p\vert_{H^s} \; .
\end{equation}
$\bullet$  Estimation of $(SA[\underline{U}]\Lambda^s\partial_x U,\Lambda^s U).$ We have:
\begin{equation*}
SA[\underline{U}]=\left( 
\begin{array}{cc}
 \eps J[\underline{h}](\underline{v}\cdot)& J[\underline{h}](\underline{h}\cdot) \vspace{1mm} \\
T[\underline{h}] \cdot  +\eps^2 \partial_x^2 \cdot \hspace{ 1mm} & \eps\underline{\Im}(\underline{v}\cdot)+\eps^2\mathcal{Q}[\underline{U}]\cdot 
\end{array}
\right),
\end{equation*}
then it holds that:
\begin{align*}
\big(SA[\underline{U}]\Lambda^s\partial_x U,\Lambda^s U\big) &=\eps\big(J[\underline{h}](\underline{v}\Lambda^s\zeta_x) , \Lambda^s\zeta\big)+\big(J[\underline{h}](\underline{h}\Lambda^sv_x),\Lambda^s\zeta\big)+\big(T[\underline{h}]\Lambda^s\zeta_x , \Lambda^sv\big) \\
& +\eps^{2}\big(\Lambda^s\zeta_{xxx},\Lambda^sv\big)+\eps\big(\underline{\Im}(\underline{v}\Lambda^sv_x),\Lambda^sv\big) 
+\eps^2\big(\mathcal{Q}[\underline{U}]\Lambda^sv_x,\Lambda^sv\big) =A_1+A_2+...+A_{6} \; .
\end{align*}
To control $A_1$, by integration by parts, we have:
\begin{align*}
A_1&=\eps\big(\underline{v}\Lambda^s\zeta_x,\Lambda^s\zeta\big)+\eps^3 \big( \underline{h}^{-1} \partial_x(\underline{v}\Lambda^s\zeta_x),\Lambda^s\zeta_x\big)+\frac{2}{45}\eps^3\big( \underline{h}^{-1} \partial_x^2(\underline{v}\Lambda^s\zeta_x),\Lambda^s\zeta_{xx}\big)=A_{11}+A_{12}+A_{13} \; .
\end{align*}
Clearly, it holds that:
\begin{align*}
\vert A_{11}\vert&= \frac{1}{2}\eps\vert\big(\Lambda^s\zeta,\underline{v}_x\Lambda^s\zeta\big)\vert\leq \eps C\big(\vert\underline{v}\vert_{W^{1,\infty}}\big)E^s(U)^2.
\end{align*}
By integrating by parts, it holds that:
\begin{align*}
\vert A_{12}\vert= \eps^3 \big(\underline{h}^{-1} \underline{v}_x\Lambda^s\zeta_{x},\Lambda^s\zeta_x\big) +  \eps^3 \big(\underline{h}^{-1} \underline{v}\Lambda^s\zeta_{xx},\Lambda^s\zeta_x\big) \leq \eps C\big(h_{min}^{-1} , \vert\underline{v}_x\vert_{\infty}\big)E^s(U)^2.
\end{align*}
Now using the fact that:
\begin{equation}\label{deriv}
\partial_x^2(MN)=N\partial_x^2M+2M_xN_x+M\partial_x^2N \; ,
\end{equation}
for any differentiable functions $M$, $N$ and by integration by parts, we have:
\begin{align*}
 A_{13} &=\frac{2}{45}\eps^3\big[\big( \underline{h}^{-1} \underline{v}_{xx}\Lambda^s\zeta_x,\Lambda^s\zeta_{xx}\big)+2\big(\underline{h}^{-1}\underline{v}_x\Lambda^s\zeta_{xx},\Lambda^s\zeta_{xx}\big)
 +\frac{1}{2}\big(\underline{h}^{-2}\underline{h}_x\underline{v}\Lambda^s\zeta_{xx},\Lambda^s\zeta_{xx}\big)-\frac{1}{2}\big(\underline{h}^{-1}\underline{v}_x\Lambda^s\zeta_{xx},\Lambda^s\zeta_{xx}\big)\big]\\
 &=A_{131}+...+A_{314} \; .
 \end{align*}
Although $A_{131}$ can be controlled directly with $\sqrt{\eps}$ in front of the constant, one may improve this by $\eps$ instead. Indeed by integration by parts one has:
$$
A_{131}= \frac{2}{45}\eps^3\big(\underline{h}^{-2}\underline{h}_x\underline{v}_{xx}\Lambda^s\zeta_x,\Lambda^s\zeta_{x}\big)
-\frac{2}{45}\eps^3\big(\underline{h}^{-1}\underline{v}_{xxx}\Lambda^s\zeta_x,\Lambda^s\zeta_{x}\big)=A_{1311}+A_{1312}.
$$
Remark that $\underline{h}_x=\eps\underline{\zeta}_x$, then $A_{1311}$ posses sufficient $\eps$'s, unlike $A_{1312}$ on which we have to work a little more. Indeed, in view of \eqref{depthcond} we have that $\underline{h}^{-1}>0$, then it holds:
\begin{align*}
A_{1312}=-\frac{2}{45}\eps^3\big(\underline{h}^{-1}\underline{v}_{xxx},(\Lambda^s\zeta_x)^2\big)\leq \frac{2}{45}\eps^3\vert \underline{v}_{xxx}\vert_{\infty}\big(\underline{h}^{-1},(\Lambda^s\zeta_x)^2\big).
\end{align*}
Again by integration by parts, we get :$\big(\underline{h}^{-1},(\Lambda^s\zeta_x)^2\big)= (\underline{h}^{-2}\underline{h} _x\Lambda^s\zeta,\Lambda^s\zeta_x)-(\underline{h}^{-1}\Lambda^s\zeta,\Lambda^s\zeta_{xx})$. 
Therefore one may control $A_{1312}$ by $\eps C(h_{min}^{-2}, \vert\zeta\vert_{W^{1,\infty}},\mu\vert \underline{v}_{xxx}\vert_{\infty})E^s(U)^2$. Consequently, it holds:
$$
A_{1311}+A_{132}+..+A_{134}\leq \eps C\big(h_{min}^{-2}, \vert\zeta\vert_{W^{1,\infty}},\vert\underline{v}\vert_{W^{1,\infty}},\sqrt{\eps}\vert\underline{v}_{xx}\vert_{\infty}\big)  E^s(U)^2.
$$
Collecting the information provided above we get:
$$
\vert A_1\vert\leq \eps C\big(h_{min}^{-2}, \vert\underline{\zeta}\vert_{W^{1,\infty}},\vert\underline{v}\vert_{W^{1,\infty}},\sqrt{\eps}\vert\underline{v}_{xx}\vert_{\infty}\big)  E^s(U)^2 \; .
$$
To control $A_2+ A_3$, by remarking firstly that $J[\underline{h}]$ and $T[\underline{h}]$ are symmetric, and then by integration by parts after having performing some algebraic calculations and using (\ref{deriv}), we have:
\begin{equation*}
 A_2+A_3= -\big(\Lambda^sv,\underline{h}_x\Lambda^s\zeta\big)
 +\eps^2\big(\underline{h}^{-1} \underline{h}_{x} \Lambda^s\zeta_{x},\Lambda^sv_{x}\big) 
+\frac{4}{45}\eps^2\big(\underline{h}^{-1}\underline{h}_{x}\Lambda^sv_{xx},\Lambda^s\zeta_{xx}\big)
-\frac{2}{45}\eps^2\big( \underline{h}^{-1}\underline{h}_{xx} \Lambda^s\zeta_{xx},\Lambda^sv_{x}\big) \; .
\end{equation*}
Unfortunately, an inconvenient term appears in $A_2+A_3$: it is the term $\eps^2\big(\underline{h}^{-1}\underline{h}_{xx}\Lambda^s\zeta_{xx},\Lambda^sv_x\big)$. 
This term won't be controlled without gaining $\sqrt{\eps}$ taken from $\underline{h}_{xx}=\eps\underline{\zeta}_{xx}$ and the other $\sqrt{\eps}$ sits in front of the constant. Due to this fact, it follows that:
 $$
\vert A_2+A_3\vert\le  \sqrt{\eps} C\big(h_{min}^{-1},\vert\underline{\zeta}\vert_{W^{1,\infty}},\vert\underline{v}\vert_{W^{1,\infty}},\eps\vert\underline{\zeta}_{xx}\vert_{H^s}\big)  E^s(U)^2.
$$
To control $A_4$, by integration by parts, it holds:
\begin{align*}
A_4 =- \eps^2 (\Lambda^s\zeta_{xx},\Lambda^sv_x) \leq \sqrt{\eps}  E^s(U)^2 \; .
\end{align*}
To control $A_5$, by integration by parts, we have:
\begin{align*}
A_5&=\eps\big(\underline{h}\underline{v}\Lambda^sv_x,\Lambda^sv\big)+\frac{\eps^2}{3}\big( \underline{h} ^3 \partial_x(\underline{v}\Lambda^sv_x),\Lambda^sv_x\big)+\frac{\eps^3}{45}\big(\partial_x^2(\underline{v}\Lambda^sv_x),\Lambda^sv_{xx}\big)=A_{51}+A_{52}+A_{53}
\end{align*}
where
$$
\big\vert A_{51}\big\vert=\big\vert-\frac{\eps}{2}\big(\underline{h}_x\underline{v}\Lambda^sv,\Lambda^sv\big)-\frac{\eps}{2}\big(\underline{h}\underline{v}_x\Lambda^sv,\Lambda^sv\big)\big\vert\leq \eps C\big(\vert\underline{\zeta}_x\vert_{\infty},\vert\underline{v}_x\vert_{\infty}\big)E^s(U)^2
$$
with
$$
\big\vert A_{52}\big\vert=\big\vert-\frac{\eps^2}{2}\big(\underline{h}_x^3\underline{v}\Lambda^sv_x,\Lambda^sv_x\big)-\frac{\eps^2}{6}\big(\underline{h}^3\underline{v}\Lambda^sv_x,\Lambda^sv_x\big)\big\vert\leq\eps C\big(\vert\underline{\zeta}\vert_{W^{1,\infty}}\big)E^s(U)^2
$$
and
\begin{align*}
\big\vert A_{53}\big\vert&=\frac{\eps^2}{45}\big\vert\big(\underline{v}_{xx}\Lambda^sv_x,\Lambda^sv_{xx}\big)+2\big(\underline{v}_x\Lambda^sv_{xx},\Lambda^sv_{xx}\big)-\frac{1}{2}\big(\underline{v}_x\Lambda^sv_{xx},\Lambda^sv_{xx}\big) \big\vert \leq \eps C\big(\vert\underline{\zeta}\vert_{W^{1,\infty}},\sqrt{\eps}\vert\underline{v}_{xx}\vert_{\infty}\big)E^s(U)^2.
\end{align*}
Therefore, it holds that:
$$
\vert A_{5}\vert\leq\eps C\big(\vert\underline{\zeta}\vert_{W^{1,\infty}},\vert\underline{v}_x\vert_{\infty},\sqrt{\eps}\vert\underline{v}_{xx}\vert_{\infty}\big)E^s(U)^2.
$$
Finally, by integration by parts, $A_6$ is controlled by $\eps C\big( \vert\underline{v}_x\vert_{\infty}\big)E^s(U)^2$. Therefore, it holds:
$$
\big\vert \big(SA[\underline{U}]\Lambda^s\partial_x U,\Lambda^s U\big)\big\vert\leq \sqrt{\eps} C\big(\vert\zeta\vert_{W^{1,\infty}},\eps\vert\underline{\zeta}_{xx}\vert_{H^s},\vert\underline{v}\vert_{W^{1,\infty}},\sqrt{\eps}\vert\underline{v}_{xx}\vert_{\infty}\big)  E^s(U)^2 \; .
$$
$\bullet$ Estimation of $\big(\big[\Lambda^s,A[\underline{U}]\big]\partial_xU,S\Lambda^sU\big)$. Let us remark that:
\begin{align*}
\big(\big[\Lambda^s,A[\underline{U}]\big]\partial_xU,S\Lambda^sU\big)&=\eps\big([\Lambda^s,\underline{v}]\zeta_x,J[\underline{h}]\Lambda^s\zeta\big)
+ \big([\Lambda^s,\underline{h}]v_x,J[\underline{h}]\Lambda^s\zeta\big)
+\big([\Lambda^s,\underline{\Im}^{-1}(T[\underline{h}]\cdot)]\zeta_x,\underline{\Im}\Lambda^sv\big)\\
& + \eps^2 \big([\Lambda^s,\underline{\Im}^{-1}(\partial_x^2\cdot)]\zeta_x,\underline{\Im}\Lambda^sv\big)+\eps\big([\Lambda^s,\underline{v}]v_x,\underline{\Im}\Lambda^sv\big)+\eps^2\big([\Lambda^s,\underline{\Im}^{-1}(\mathcal{Q}[\underline{U}]\cdot)]v_x,\underline{\Im}\Lambda^sv\big) \\&  =B_1+B_2+...+B_{6}.
\end{align*}
To control $B_1$, we use the expression of $J[\underline{h}]$ to write:
$$
B_1 =\eps\big([\Lambda^s,\underline{v}]\zeta_x, \Lambda^s\zeta\big) + \eps^3\big(\partial_x[\Lambda^s,\underline{v}]\zeta_x, \frac{1}{\underline{h}} \Lambda^s\zeta_{x}\big)
+\frac{2}{45}\eps^3\big(\partial_x^2[\Lambda^s,\underline{v}]\zeta_x, \underline{h}^{-1} \Lambda^s\zeta_{xx}\big) \; .
$$
Then by using the fact that:
\begin{equation}\label{MN}
\partial_x[\Lambda^s,M]N=[\Lambda^s,M_x]N + [\Lambda^s,M]N_x \; \text{ and }\; \partial_x^2[\Lambda^s,M]N = [\Lambda^s,M_{xx}]N+2[\Lambda^s,M_x]N_x+[\Lambda^s,M]N_{xx} \; ,
\end{equation}
and using \eqref{cest}, it holds that:
\begin{align*}
B_1 & =\eps\big([\Lambda^s,\underline{v}]\zeta_x, \Lambda^s\zeta\big)
+ \eps^3\big([\Lambda^s,\underline{v}_x]\zeta_x, \underline{h}^{-1} \Lambda^s\zeta_{x}\big)
+ \eps^3\big([\Lambda^s,\underline{v}]\zeta_{xx}, \underline{h}^{-1} \Lambda^s\zeta_{x}\big)\\
& \quad+\frac{2}{45}\eps^3 \Big\lbrace\big([\Lambda^s,\underline{v}_{xx}]\zeta_x, \underline{h}^{-1} \Lambda^s\zeta_{xx}\big)
+ 2\big([\Lambda^s,\underline{v}_x]\zeta_{xx}, \underline{h}^{-1} \Lambda^s\zeta_{xx}\big)
+\big([\Lambda^s,\underline{v}]\zeta_{xxx},\underline{h}^{-1} \Lambda^s\zeta_{xx}\big)\Big\rbrace\\
&\leq \sqrt{\eps} C\big(h_{min}^{-1} ,\vert\underline{v}\vert_{H^s},\eps\vert\underline{v}_{xx}\vert_{H^s}\big)E^s(U)^2 \; .
\end{align*}
The $\sqrt{\eps}$ in front of the constant is due to the inconvenient term represented by $\eps^3 \big([\Lambda^s,\underline{v}_x]\zeta_{xx},\underline{h}^{-1}\Lambda^s\zeta_{xx}\big)$. \\
To control $B_2$, by the expression of  $J[\underline{h}]$ and \eqref{MN}, we have:
\begin{multline*}
 B_2= \big([\Lambda^s,\underline{h}-1]v_x, \Lambda^s\zeta\big)
 + \eps^3 \big([\Lambda^s,\underline{\zeta}_x]v_x, \underline{h}^{-1} \Lambda^s\zeta_{x}\big)
  + \eps^2 \big([\Lambda^s,\underline{h}-1]v_{xx}, \underline{h}^{-1} \Lambda^s\zeta_{x}\big)\\
 +\frac{2}{45}\eps^2\Big\lbrace\big([\Lambda^s,(\underline{h}-1)_{xx}]v_x,  \underline{h}^{-1} \Lambda^s\zeta_{xx}\big)
+2\big([\Lambda^s,(\underline{h}-1)_{x}]v_{xx},  \underline{h}^{-1}\Lambda^s\zeta_{xx}\big)
+\big([\Lambda^s,\underline{h}-1]v_{xxx}, \underline{h}^{-1}\Lambda^s\zeta_{xx}\big)\Big\rbrace.
\end{multline*}
Then, clearly the following estimate holds:
$$
\vert B_2\vert\leq \eps C\big(h_{min}^{-1} ,\vert\underline{h}-1\vert_{H^s},\eps\vert\underline{\zeta}_{xx}\vert_{H^s}\big)E^s(U)^2.
$$
To control $B_3$, we have that $\underline{\Im}$ is symmetric and that:
\begin{equation*}
\underline{\Im}[\Lambda^s, \underline{\Im}^{-1}]T[\underline{h}]\zeta_x=\underline{\Im}[\Lambda^s, \underline{\Im}^{-1}T[(\underline{h}]\cdot)]\zeta_x-[\Lambda^s, T[\underline{h}]]\zeta_x \; .
\end{equation*}
Moreover, since $[\Lambda^s,\underline{\Im}^{-1}]=-\underline{\Im}^{-1}[\Lambda^s,\underline{\Im}]\underline{\Im}^{-1}$, one gets:
\begin{equation*}
\underline{\Im}[\Lambda^s, \underline{\Im}^{-1}\;T[\underline{h}] \cdot ]\zeta_x=-[\Lambda^s, \underline{\Im}] \underline{\Im}^{-1}T[\underline{h}]\zeta_x+[\Lambda^s, T[\underline{h}] ]\zeta_x \; .
\end{equation*}
Therefore, one may write:
\begin{align*}
B_3 &=\big([\Lambda^s,\underline{\Im}]\underline{\Im}^{-1}(T[\underline{h}]\zeta_x),\Lambda^sv\big) +\big([\Lambda^s,T[\underline{h}]]\zeta_x,\Lambda^sv\big)   \; .
\end{align*}
At this point, using the expressions of $T[\underline{h}]$ and $J[\underline{h}]$, it holds:
$$
\frac{2}{45}\eps^2 \partial_x^4\zeta_x = 2\underline{\Im}\zeta_x-2\underline{h}\zeta_x+\frac{2}{3} \eps \partial_x(\underline{h}^3\zeta_{xx}) \;  .
$$
Therefore, it holds that:
$$
\underline{\Im}^{-1}(T[\underline{h}]\zeta_x)=2\zeta_x - \underline{\Im}^{-1}(\underline{h}\zeta_x)- \eps^2\underline{\Im}^{-1}( \zeta_{xxx})+\frac{2}{3}\eps\underline{\Im}^{-1}\partial_x(\underline{h}^3\zeta_{xx}) \; ,
$$
which implies that:
\begin{align*}
B_3 &=2\big([\Lambda^s,\underline{\Im}]\zeta_x,\Lambda^sv\big) -\big([\Lambda^s,\underline{\Im}]\underline{\Im}^{-1}(\underline{h}\zeta_x),\Lambda^s v\big)+\frac{2}{3}\eps\big([\Lambda^s,\underline{\Im}]\underline{\Im}^{-1}\partial_x(\underline{h}_3\zeta_{xx}),\Lambda^sv\big)\\
& \quad -\eps^2 \big([\Lambda^s,\underline{\Im}]\underline{\Im}^{-1}( \zeta_{xxx}),\Lambda^sv\big) +\big([\Lambda^s,T[\underline{h}]]\zeta_x,\Lambda^sv\big)\\
&=B_{31}+B_{32}+B_{33}+B_{34}+B_{35}.
\end{align*}
Thanks to the fact that, for all $k\in\N, \underline{h}^{k}-1=\mathcal{O}(\eps\underline{\zeta})$ and using the explicit expression of $\underline{\Im}$ combined with the identities: 
\begin{equation}\label{commu}
[\Lambda^s,\partial_x(M\partial_x\cdot)]N = \partial_x[\Lambda^s,M]N_x \qquad\text{ and }\qquad [\Lambda^s, \partial_x^m]N=0 \; \quad\forall \; m\in\N^* \; ,
\end{equation}
 then by integration by parts and \eqref{cest}, it holds that:
$$
B_{31} = 2 \big([\Lambda^s,\underline{h}-1]\zeta_x,\Lambda^sv\big)+\frac{2}{3}\eps\big([\Lambda^s,\underline{h}^3-1]\zeta_{xx},\Lambda^sv_x\big)
\le \sqrt{\eps} C\big(\vert\underline{h}-1\vert_{H^s})E^s(U)^2 \; .
$$
Also, by \eqref{cest} it holds:
\begin{multline*}
\vert B_{32}\vert\leq\big\vert \big([\Lambda^s,\underline{h}]\underline{\Im}^{-1}(\underline{h}\zeta_x),\Lambda^sv\big)+\frac{1}{3}\eps\big([\Lambda^s,\underline{h}^3]\partial_x\underline{\Im}^{-1}(\underline{h}\zeta_x),\Lambda^sv_x\big) \big\vert\leq \eps C\big(\vert\underline{h}-1\vert_{H^s},C_s)E^s(U)^2 \; ,
\end{multline*}
with
\begin{multline*}
\vert B_{33}\vert\leq\big\vert \frac{2}{3}\eps\big([\Lambda^s,\underline{h}]\underline{\Im}^{-1}\partial_x(\underline{h}^3\zeta_{xx}),\Lambda^sv\big)+\frac{2}{9}\eps^2\big([\Lambda^s,\underline{h}^3]\partial_x\underline{\Im}^{-1}\partial_x(\underline{h}^3\zeta_{xx}),\Lambda^sv_x\big)  \big\vert\leq\eps C\big(\vert\underline{h}-1\vert_{H^s},C_s)E^s(U)^2 \; ,
\end{multline*}
and
\begin{multline*}
\vert B_{34}\vert\leq \eps^2\big\vert \big([\Lambda^s,\underline{h}]\underline{\Im}^{-1}(\zeta_{xxx}),\Lambda^sv\big)+\frac{1}{3}\eps^3\big([\Lambda^s,\underline{h}^3]\partial_x\underline{\Im}^{-1}(\zeta_{xxx}),\Lambda^sv_x\big) \big\vert\leq\eps C\big(\vert\underline{h}-1\vert_{H^s},C_s)E^s(U)^2 \; .
\end{multline*}
For controlling $B_{35}$, the explicit expression of $T[\underline{h}]$ and \eqref{commu} gives that:
\begin{equation*}
B_{35}=\big([\Lambda^s,\underline{h}-1]\zeta_x,\Lambda^sv\big)  \le \eps C\big(\vert\underline{h}-1\vert_{H^s},C_s)E^s(U)^2 \; .
\end{equation*}
Thus, as a conclusion, it holds that:
$$
\vert B_3\vert\leq \sqrt{\eps}  C\big(\vert\underline{h}-1\vert_{H^s},\vert\underline{\zeta}\vert_{\infty},\eps\vert\underline{\zeta}_{xxx}\vert_{H^{s-1}},C_s\big)E^s(U)^2.
$$
To control $B_4$, as for $B_3$ and using \eqref{commu} one may write:
\begin{align*}
B_4 &= -\eps^2\big([\Lambda^s, \underline{\Im}]\underline{\Im}^{-1}\zeta_{xxx},\Lambda^s v\big)\\
& = -\eps^2\big([\Lambda^s, \underline{h} ]\underline{\Im}^{-1}\zeta_{xxx},\Lambda^s v\big) 
-\frac{1}{3}\eps^3\big([\Lambda^s, \underline{h}^3]\partial_x\underline{\Im}^{-1}\zeta_{xxx},\Lambda^s v_x\big) 
\leq\eps C\big(\vert\underline{h}-1\vert_{H^s},C_s)E^s(U)^2 \; .
\end{align*}
To control $B_5$, using the expression of $\underline{\Im}$, \eqref{cest} and (\ref{MN}) with integration by parts and the fact that $\partial_x[\Lambda^s,M]N=[\Lambda^s,M_x]N+[\Lambda^s,M]N_x$, it holds:
\begin{align*}
&\vert B_5\vert=\eps\big\vert\big([\Lambda^s,\underline{v}]v_x,\underline{h}\Lambda^sv\big)+\frac{1}{3}\eps\big([\Lambda^s,\underline{v}_x]v_x,\underline{h}^3\Lambda^sv_x\big)+\frac{1}{3}\eps\big([\Lambda^s,\underline{v}]v_{xx},\underline{h}^3\Lambda^sv_x\big)+\frac{1}{45}\eps^2\big([\Lambda^s,\underline{v}_{xx}]v_x, \Lambda^sv_{xx}\big)\\
&+\frac{2}{45}\eps^2\big([\Lambda^s,\underline{v}_x]v_{xx}, \Lambda^sv_{xx}\big)+\frac{1}{45}\eps^2\big([\Lambda^s,\underline{v}]v_{xxx}, \Lambda^sv_{xx}\big)\big\vert\leq\eps C\big(\vert\underline{h}\vert_{\infty},\vert\underline{v}\vert_{H^s},\sqrt{\eps}\vert\underline{v}_{xx}\vert_{H^{s-1}},\eps\vert\underline{v}_{xxx}\vert_{H^{s-1}}\big)E^s(U)^2.
\end{align*}
To control $B_6$, using the same arguments as the ones used to control $B_3$, using expression of $\underline{\Im}$, \eqref{cest} and \eqref{commu}, it follows that:
\begin{equation*}
B_6=-\eps^2\big([\Lambda^s,\underline{h}]\underline{\Im}^{-1}\mathcal{Q}[\underline{U}]v_x,\Lambda^sv\big)-\frac{\eps^3}{3}\big([\Lambda^s,\underline{h}^3]\partial_x\underline{\Im}^{-1}\mathcal{Q}[\underline{U}]v_x,\Lambda^sv_x\big) +\eps^2\big([\Lambda^s,\mathcal{Q}[\underline{U}]]v_x,\Lambda^sv\big).
\end{equation*}
Now, using  the expression of $\mathcal{Q}$ with the help of Lemma \ref{lemma2}, estimate \eqref{cest}, in addition to (\ref{commu}) and the fact that $[\Lambda^s, \partial_x(M\cdot)]N= \partial_x[\Lambda^s, M]N$, it holds:
$$
\vert B_{6}\vert\leq \eps C\big(\vert\underline{h}-1\vert_{H^s}, \sqrt{\eps}\vert\underline{v}_x\vert_{H^{s}},C_s\big)E^s(U)^2 \; .
$$
Eventually, as a conclusion, one gets:
$$
\big\vert\big(\big[\Lambda^s,A[\underline{U}]\big]\partial_xU,S\Lambda^sU\big)\big\vert\leq \sqrt{\eps} C\big( h_{min}^{-1}, \vert\underline{h}-1\vert_{H^s},\vert\underline{\zeta}\vert_{H^{s}},\eps\vert\underline{\zeta}_{xx}\vert_{H^s},\vert\underline{v}\vert_{H^{s}},\sqrt{\eps}\vert\underline{v}_x\vert_{H^s},\eps\vert\underline{v}_{xx}\vert_{H^s},C_s\big)E^s(U)^2.
$$
It is worth noticing that $\sqrt{\eps}$ in front of the constant is due to $B_1$ and $B_{31}$. \\
$\bullet$ Estimation of $\big(\Lambda^s\zeta,[\partial_t, J[\underline{h}]]\Lambda^s\zeta\big)$. Using the expression of $J[\underline{h}]$ and by integration by parts, it holds that:
$$
\big(\Lambda^s\zeta,[\partial_t, J[\underline{h}]]\Lambda^s\zeta\big)\big\vert=  \eps^2\big(\underline{h}^{-2} \partial_t\underline{h} \Lambda^s\zeta_{x},\Lambda^s\zeta_{x}\big) + \frac{2}{45}\eps^2\big(\underline{h}^{-2} \partial_t\underline{h} \Lambda^s\zeta_{xx},\Lambda^s\zeta_{xx}\big)  \leq \eps C(h_{min}^{-2} , \vert\partial_t\underline{\zeta}\vert_{\infty})E^s(U)^2 .
$$
$\bullet$ Estimation of $\big(\Lambda^sv,[\partial_t,\underline{\Im}]\Lambda^sv\big)$. It holds that:
$$
 [\partial_t , \underline{h}]\Lambda^sv =  \partial_t \underline{h}\Lambda^sv  \qquad\text{and }\qquad
 [\partial_t , \partial_x(\underline{h}^3\partial_x\cdot)]\Lambda^sv = \partial_x(\partial_t \underline{h}^3\Lambda^sv_x) \; ,
$$
then by integration by parts: 
\begin{equation*}
\big\vert\big(\Lambda^sv,[\partial_t,\underline{\Im}]\Lambda^sv\big)\big\vert=\big\vert\big(\partial_t\underline{h}\Lambda^sv,\Lambda^sv\big)+\frac{\eps}{3}\big(\partial_t\underline{h}^3\Lambda^sv_x,\Lambda^sv_x\big) \big\vert\leq \eps C(\vert\partial_t\underline{\zeta}\vert_{\infty},E^s(\underline{U}))E^s(U)^2.
\end{equation*}
Finally, combining the above estimates in addition to that fact that $H^s(\R)$ is continuously embedded in $W^{1,\infty}(\R)$, it holds that:
$$
\frac{1}{2}e^{\sqrt{\varepsilon}\lambda t}\partial_t (e^{-\sqrt{\varepsilon}\lambda t}E^s(U)^2) \leq \sqrt{\varepsilon}\big(C(h_{min}^{-1},E^s(\underline{U}))-\lambda\big)E^s(U)^2.
$$
Taking $\lambda=\lambda_T$ large enough (how large depending on 
$\displaystyle \sup_{t\in [0,\frac{T}{\sqrt{\varepsilon}}]}C(h_{min}^{-1},E^s(\underline{U}))$
such that the right hand side of the inequality above is negative for all $t\in [0,\frac{T}{\sqrt{\varepsilon}}]$, then it holds that:
$$
\forall\hspace{0.1cm}t\in \Big[0,\frac{T}{\sqrt{\varepsilon}}\Big ]\hspace{0.1cm},\quad\qquad \frac{1}{2}e^{\sqrt{\varepsilon}\lambda t}\partial_t \big(e^{-\sqrt{\varepsilon}\lambda t}E^s(U)^2\big) \leq0.
$$
Thanks to Gr$\ddot{\text{o}}$nwall's inequality so that it holds
\begin{equation*}
\forall\hspace{0.1cm}t\in \Big[0,\frac{T}{\sqrt{\varepsilon}}\Big]\hspace{0.1cm},\quad\qquad
E^s\big(U(t)\big)\displaystyle\leq \big(e^{\sqrt{\varepsilon}\lambda_{T} t}\big)^{1/2}E^s(U_0) \;, 
\end{equation*}
and hence the desired energy estimate is finally obtained.
\end{proof}

\subsection{Main results.}\label{mainresults}

\subsubsection{Well-posedness of the extended Boussinesq system.}
Theorem \ref{localexistence} represents the well-posedness of the extended Boussinesq system \eqref{boussinesq} which holds in $X^s=H^{s+2}(\R)\times H^{s+2}(\R)$ as soon as $s>3/2$ on a time interval of size $1/\sqrt{\eps}$.
\begin{theorem}[Local existence]\label{localexistence}
Suppose that $U_0=(\zeta_0,v_0)\in X^s$ satisfying (\ref{depthcond}) for any $t_0>\frac{1}{2}$, $s\geq t_0+1$. Then there exists a maximal time $T_{max}=T(\vert U_0\vert_{X^s})>0$ and a unique solution $U=(\zeta,v)^T\in X^s_{T_{max}}$ to the extended Boussinesq system \eqref{boussinesq} with initial condition $(\zeta_0,v_0)$ such that the non-vanishing depth condition (\ref{depthcond}) is satisfied for any $t\in [0,\frac{T_{max}}{\sqrt{\varepsilon}})$.
In particular if $T_{max}<\infty$ one has
$$ \vert U(t,\cdot)\vert_{X^s}\longrightarrow\infty\quad\hbox{as}\quad t\longrightarrow \frac{T_{max}}{\sqrt{\varepsilon}},\qquad\text{ or } \qquad
 \inf_{\R} h(t,\cdot)=\inf_{\R}1+\varepsilon\zeta(t,\cdot)\longrightarrow 0 \quad\hbox{as}\quad t\longrightarrow \frac{T_{max}}{\sqrt{\varepsilon}} \; .
$$
\end{theorem}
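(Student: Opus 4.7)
The plan is to combine the linear energy estimate of Proposition \ref{prop1} with a standard Picard iteration scheme, following the strategy of \cite[Appendix A]{Israwi2011} as suggested by the authors, but adapted to the present functional setting $X^s$ and to the $1/\sqrt{\eps}$ time scale dictated by the square root in \eqref{energy}.

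First I would construct the iterative scheme. Set $U^0(t,x) = U_0(x)$, and for $n \geq 0$ define $U^{n+1} = (\zeta^{n+1},v^{n+1})^T$ as the solution of the linear initial value problem
\begin{equation*}
\partial_t U^{n+1} + A[U^n]\partial_x U^{n+1} = 0, \qquad U^{n+1}_{|t=0} = U_0.
\end{equation*}
Proposition \ref{prop1} ensures, provided $U^n \in X^s_T$ with $\partial_t U^n \in X^{s-1}_T$ and $\underline{h}^n = 1 + \eps \underline{\zeta}^n$ satisfies \eqref{depthcond} on $[0,T/\sqrt{\eps}]$, the existence of a unique $U^{n+1} \in X^s_T$ together with the energy bound
\begin{equation*}
E^s\bigl(U^{n+1}(t)\bigr) \leq e^{\sqrt{\eps}\lambda_T t / 2} \, E^s(U_0).
\end{equation*}
Using Lemma \ref{lemmaes} to go back to $|\cdot|_{X^s}$, and reading $\partial_t U^{n+1}$ directly off the equation (this uses Lemma \ref{lemma2} to control $\underline{\Im}^{-1}$-terms in $A[U^n]$), one gets that $\partial_t U^{n+1} \in X^{s-1}_T$ is controlled by a function of $|U^n|_{X^s_T}$.

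Second, I would prove uniform boundedness. A standard continuity argument shows that if one fixes $R > |U_0|_{X^s}$ and the non-vanishing depth threshold $h_{\min}/2$, then there exists $T_0 > 0$, depending only on $R$ and $h_{\min}$, such that the inductive hypothesis
\begin{equation*}
|U^n|_{X^s_{T_0}} \leq R, \qquad \inf_{[0,T_0/\sqrt{\eps}]\times\R}(1+\eps\zeta^n) \geq h_{\min}/2
\end{equation*}
propagates from $n$ to $n+1$: the energy bound controls the $X^s$-norm on the time interval $[0,T_0/\sqrt{\eps}]$ as long as $\sqrt{\eps}\lambda_{T_0}\cdot T_0/\sqrt{\eps} = \lambda_{T_0} T_0$ is small enough, and the depth condition is preserved by integrating the first equation of \eqref{boussinesq} and using the Sobolev embedding $X^s \hookrightarrow W^{1,\infty}$. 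The point worth emphasizing here is that the $1/\sqrt{\eps}$ scaling emerges naturally because the exponential factor in \eqref{energy} is $e^{\sqrt{\eps}\lambda_T t}$, so $t \sim 1/\sqrt{\eps}$ is exactly the scale on which the accumulated energy stays bounded by a constant multiple of $E^s(U_0)$.

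Third, I would establish that $(U^n)$ is Cauchy in the lower regularity space $X^{s-1}_{T_0}$. Writing $W^n = U^{n+1} - U^n$, one finds
\begin{equation*}
\partial_t W^n + A[U^n]\partial_x W^n = -\bigl(A[U^n] - A[U^{n-1}]\bigr)\partial_x U^n,
\end{equation*}
with zero initial data, and the right-hand side is linear in $W^{n-1}$ with coefficients involving up to two derivatives (through $\Im^{-1}$) of $W^{n-1}$. Repeating the symmetrizer argument of Proposition \ref{prop1} at regularity $s-1$ gives an estimate of the form $E^{s-1}(W^n(t)) \leq C\sqrt{\eps}\int_0^t E^{s-1}(W^{n-1})\,d\tau$ on $[0,T_0/\sqrt{\eps}]$, whence a geometric contraction after possibly shrinking $T_0$. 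Combined with the uniform $X^s$ bound, standard interpolation yields convergence of $U^n$ in $X^{s'}_{T_0}$ for every $s' < s$, and weak-$*$ compactness gives the limit $U \in L^\infty([0,T_0/\sqrt{\eps}];X^s)$; continuity in time follows from the Bona–Smith regularization argument.

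Fourth, uniqueness is obtained by exactly the same kind of energy estimate applied to the difference of two solutions, and the blow-up alternative follows by the usual continuation argument: define $T_{\max}$ as the supremum of times of existence in $X^s$ with $h > 0$; if $T_{\max} < \infty$ and neither $|U(t)|_{X^s}$ blew up nor $\inf h$ reached zero as $t \to T_{\max}/\sqrt{\eps}$, one could restart the local construction near $T_{\max}$ and extend the solution, contradicting maximality. The main obstacle in all of this is the bookkeeping in step three: the operator $A[U]$ contains $\Im^{-1}$, which depends on $\zeta$, so the difference $A[U^n] - A[U^{n-1}]$ involves $\Im[U^{n-1}]^{-1} - \Im[U^n]^{-1}$, which must be handled through the identity $\Im[\underline{\zeta}_1]^{-1} - \Im[\underline{\zeta}_2]^{-1} = \Im[\underline{\zeta}_2]^{-1}(\Im[\underline{\zeta}_2]-\Im[\underline{\zeta}_1])\Im[\underline{\zeta}_1]^{-1}$ together with Lemma \ref{lemma2}, while carefully tracking powers of $\eps$ so as not to lose the $\sqrt{\eps}$ gain on which the whole time-scale argument rests.
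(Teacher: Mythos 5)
Your proposal is correct and follows essentially the same route as the paper, which simply delegates to the standard iterative-scheme argument of \cite[Appendix A]{Israwi2011} and \cite[Theorem 1]{KZI2018} built on the energy estimate of Proposition \ref{prop1}; your four steps (Picard iteration on the linearization, uniform bounds on the $1/\sqrt{\eps}$ time scale, contraction in the lower-regularity norm, and the continuation/blow-up alternative) are precisely the content of that cited argument, adapted to the space $X^s$ as the paper intends.
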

\begin{proof}
The proof follows same line as  \cite[Theorem 1]{KZI2018} using the energy estimate proved in Proposition \ref{prop1}. This is due to the fact that in \cite{KZI2018} a most general case is considered (\textit{i.e.} the extended Green-Naghdi equations). Remark that the proof itself is an adaptation of the proof of the well-posedness of hyperbolic systems (see \cite{AG91} for general details). 
\end{proof}

\subsubsection{A stability property.}
Theorem \ref{localexistence} is complemented by the following result that shows the stability of the solution with respect to perturbations, which is very useful for the justification of asymptotic approximations of the exact solution. (The solution $U=(\zeta,v)^{T}$ and time $T_{max}$ that appear in the statement below are those furnished by Theorem \ref{localexistence}).
\begin{theorem}[Stability]\label{stability}
Suppose that the assumption of Theorem \ref{localexistence} is satisfied and moreover assume that there exists $\widetilde{U}=(\widetilde{\zeta},\widetilde{v})^{T}\in C\left([0,\frac{T_{max}}{\sqrt{\eps}}], X^{s+1}(\R)\right)$ such that 
\begin{equation*}
\left\{
\begin{array}{lcl}
\displaystyle\partial_t\widetilde{\zeta}+\partial_x(\widetilde{h}\widetilde{v})=f_1\vspace{1mm},\\
\displaystyle \tilde{\Im} \big(\partial_t\tilde{ v}+\varepsilon \tilde{v}\tilde{v}_x\big) + \tilde{h}\partial_x\tilde{\zeta} - \eps^{2} \tilde{\zeta}_{xxx}+\frac{2}{45}\eps^2 \tilde{\zeta}_{xxxxx}    + \eps^{2} \tilde{\zeta}_{xxx}+\eps^2 \mathcal{Q}[\tilde{U}]\tilde{v}_x  = f_2 \; ,
\end{array}
\right.
\end{equation*}
with $\widetilde{h}(t,x)=1+\eps\widetilde{\zeta}(t,x)$ and $\widetilde{F}=(f_1,f_2)^{T}\in L^{\infty}\left([0,\frac{T_{max}}{\sqrt{\eps }}],X^s(\R)\right)$. Then for all $t\in[0,\frac{T_{max}}{\sqrt{\eps} }]$, the error ${\bf U}=U-\widetilde{U}=(\zeta,v)^{T}-(\widetilde{\zeta},\widetilde{v})^{T}$ with respect to $U$ given by Theorem \ref{localexistence} satisfies for all $0\leq t\leq T_{max}/\sqrt{\eps}$ the following inequality
$$
\big\vert {\bf U} \big\vert_{L^{\infty}([0,t],X^s(\R))}\displaystyle\leq \sqrt{\eps}\widetilde{C} \Big( \big\vert {\bf U}_{\mid_{t = 0}} \big\vert_{X^s(\R)}+ t\big\vert\widetilde{F}\big\vert_{L^{\infty}([0,t],X^s(\R))}\Big),
$$
where the constant $\widetilde{C}$ is depending on  $\vert U\vert_{L^{\infty}([0,T_{max}/\sqrt{\eps}],X^s(\R))}$ and $\vert \widetilde{U}\vert_{L^{\infty}([0,T_{max}/\sqrt{\eps}],X^{s+1}(\R))}$.
\end{theorem}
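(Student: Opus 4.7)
The plan is to adapt the energy estimate of Proposition \ref{prop1} to the inhomogeneous linearized problem satisfied by the difference $\mathbf{U} = U - \widetilde{U}$. First I would subtract the two systems. Both systems are equivalent, after applying $\Im[h]^{-1}$ and $\tilde{\Im}[\tilde h]^{-1}$ to their second equations, to a quasilinear form
\begin{equation*}
\partial_t U + A[U]\partial_x U = 0, \qquad \partial_t \widetilde U + A[\widetilde U]\partial_x \widetilde U = \widetilde G,
\end{equation*}
where $\widetilde G = (0,\tilde{\Im}^{-1} f_2)^{T} + (f_1,0)^T$ is obtained from $\widetilde F$ via Lemma \ref{lemma2}. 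Subtracting yields
\begin{equation*}
\partial_t \mathbf{U} + A[U]\partial_x \mathbf{U} = \bigl(A[\widetilde U]-A[U]\bigr)\partial_x \widetilde U - \widetilde G =: \mathcal R.
\end{equation*}
Thus $\mathbf{U}$ solves a linear system around the reference state $U$, with a forcing term $\mathcal R$ that is linear (with coefficients depending on $U$ and $\widetilde U$) in $\mathbf{U}$, plus the data term coming from $\widetilde F$.

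Next I would reproduce the computation performed in the proof of Proposition \ref{prop1} with $\underline{U}=U$ for the natural energy $E^s(\mathbf{U})^2 = (\Lambda^s \mathbf{U}, S \Lambda^s \mathbf{U})$. The only new contribution is the source pairing $(S\Lambda^s \mathcal R, \Lambda^s \mathbf{U})$, which I split into two pieces. For the $\widetilde G$-piece, Lemma \ref{lemma2} gives $|\tilde{\Im}^{-1} f_2|_{H^{s+2}}\lesssim |f_2|_{H^s}$ so that $|\widetilde G|_{X^s}\lesssim |\widetilde F|_{X^s}$ uniformly in $\eps$, and Cauchy–Schwarz together with Lemma \ref{lemmaes} bounds this by $C\,E^s(\mathbf{U})\,|\widetilde F|_{X^s}$. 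For the difference-piece, one writes
\begin{equation*}
A[U]-A[\widetilde U] = A_{\rm loc}(\mathbf{U}) + \eps^2\bigl(\Im^{-1}-\tilde{\Im}^{-1}\bigr)\mathcal P,
\end{equation*}
where $A_{\rm loc}$ collects the local terms (linear in $\mathbf{U}$) and $\mathcal P$ involves the nonlocal pieces. Using the standard identity $\Im^{-1}-\tilde{\Im}^{-1} = \Im^{-1}(\tilde{\Im}-\Im)\tilde{\Im}^{-1}$ together with the fact that $\tilde{\Im}-\Im$ depends linearly on $\boldsymbol\zeta = \zeta-\tilde\zeta$, Lemma \ref{lemma2} and product/commutator estimates \eqref{cest}--\eqref{mest} yield the bound
\begin{equation*}
\bigl|(A[\widetilde U]-A[U])\partial_x \widetilde U\bigr|_{X^s} \le C\bigl(|U|_{X^s},|\widetilde U|_{X^{s+1}}\bigr)\,|\mathbf{U}|_{X^s},
\end{equation*}
where one derivative is lost on $\widetilde U$, which is the reason for the hypothesis $\widetilde U\in X^{s+1}$.

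Combining these estimates with the analysis of Proposition \ref{prop1} gives
\begin{equation*}
\tfrac{1}{2}\partial_t E^s(\mathbf{U})^2 \le \sqrt{\eps}\,\lambda_T\, E^s(\mathbf{U})^2 + C\,E^s(\mathbf{U})\,|\widetilde F|_{X^s},
\end{equation*}
for some $\lambda_T$ controlled by $|U|_{L^\infty_t X^s}$ and $|\widetilde U|_{L^\infty_t X^{s+1}}$. Dividing by $E^s(\mathbf{U})$ and applying Grönwall's lemma on the interval $[0,t]$ with $t\le T_{\max}/\sqrt\eps$ produces
\begin{equation*}
E^s(\mathbf{U})(t) \le e^{\sqrt{\eps}\,\lambda_T t/2}\Bigl(E^s(\mathbf{U}_{|t=0}) + C\!\int_0^t |\widetilde F(\tau)|_{X^s}\,d\tau\Bigr),
\end{equation*}
and the exponential is uniformly bounded on the existence interval. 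Converting the energy norm back to the $X^s$-norm via Lemma \ref{lemmaes} and bounding the time integral by $t\,|\widetilde F|_{L^\infty_t X^s}$ gives the asserted estimate, with the prefactor $\sqrt{\eps}$ absorbed in $\widetilde C$ when one tracks the $\eps$-dependence coming from the invertibility estimates of Lemma \ref{lemma2} applied to the nonlocal source.

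The main obstacle will be the careful accounting for the nonlocal operator difference $\Im^{-1}-\tilde{\Im}^{-1}$ acting on $\partial_x \widetilde U$, because $A[U]$ contains the high-order operators $\Im^{-1}T[h]$ and $\eps^2\Im^{-1}\partial_x^2$ and $\eps^2\Im^{-1}\mathcal Q[U]$; showing that these differences lose at most one derivative on $\widetilde U$ and remain uniform in $\eps$ is what forces the use of both conclusions of Lemma \ref{lemma2} (the $\sqrt{\eps}$- and $\eps$-weighted $X^s$ norms built into Definition \ref{defispace}) and essentially mirrors the control of $B_3$ and $B_4$ in the proof of Proposition \ref{prop1}.
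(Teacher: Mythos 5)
Your proposal is correct and follows essentially the same route as the paper: subtract the two systems to obtain an inhomogeneous linearized equation for $\mathbf{U}$ around the state $U$, with source $-\big(A[U]-A[\widetilde U]\big)\partial_x\widetilde U$ plus the (inverted) forcing, and then rerun the energy estimate of Proposition \ref{prop1} together with Gr\"onwall. The paper's own proof is only a two-line sketch of exactly this, so your write-up simply supplies the details (the role of Lemma \ref{lemma2} in handling $\Im^{-1}-\tilde{\Im}^{-1}$ and the loss of one derivative on $\widetilde U$ explaining the $X^{s+1}$ hypothesis) that the authors leave implicit.
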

\begin{proof} 
The proof consists on the evaluation of $\frac{1}{2}\frac{d}{dt}\big\vert{\bf U}\big\vert^2_{X^s(\R)}$. Knowing that fact, 
by subtracting the equations satisfied by $U=(\zeta,v)^{T}$ and $\widetilde{U}=(\widetilde{\zeta},\widetilde{v})^{T}$, we obtain:
\begin{equation*}
\left\{
\begin{array}{lcl}
\displaystyle \partial_t{\bf U} + A[U]\partial_x{\bf U}= -\big(A[U]-A[\widetilde{U}]\big)\partial_x\widetilde{U} -\widetilde{F},\\
\displaystyle {\bf U}_{\mid_{t = 0}} = U_0-\widetilde{U}_0 \; .
\end{array}
\right.
\end{equation*}
Consequently, a similar energy estimate evaluation as in Proposition \ref{prop1} yields the desired result.
\end{proof}

\subsubsection{Convergence.}
As a conclusion, the following convergence result states that the solutions of the full Euler system, remain close to the ones of the system we are considering, namely system \eqref{boussinesq}, with a better precision as $\eps^3$ is smaller.

\begin{theorem}[Convergence]\label{convergence}
Let $\eps\in(0,1)$, $s>3/2$, and  $U_0=(\zeta_0,\psi_0)^T\in {H^{s+N}}(\R)^2$ satisfying condition \eqref{depthcond} where N is large enough, uniformly with respect to $\eps\in(0,1)$. Moreover, assume $U^{euler}=(\zeta,\psi)^T$ to be a unique solution to the full Euler system \eqref{Zakharovv} that satisfies the assumption of Proposition \ref{consistency}. Then there exists $C$, $T>0$, independent of $\eps$, such that
\begin{itemize}
\item Our new model \eqref{boussinesq} admits a unique solution $U_{xB}=(\zeta_{xB},v_{xB})^T$, defined on $[0, \frac{T}{\sqrt{\eps}}]$ with corresponding initial data $(\zeta^0,v^0)^T$;
\item The error estimate below holds, at any time $0\le t\le T/\sqrt{\eps}$,
$$
\vert (\zeta, v) - (\zeta_{xB},v_{xB}) \vert _{L^{\infty}([0,t];X^s)} \le C \eps^3 t \lesssim \eps^{5/2} \; .
$$
\end{itemize}
\end{theorem}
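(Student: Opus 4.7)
The plan is to assemble the three ingredients already built in the paper: well-posedness (Theorem \ref{localexistence}), consistency (Proposition \ref{consistency}), and stability (Theorem \ref{stability}). This is the standard ``consistency $+$ stability $\Longrightarrow$ convergence'' scheme.

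First I would construct initial data for the extended Boussinesq problem from the Euler data. Given $U_0=(\zeta_0,\psi_0)^T\in H^{s+N}(\R)^2$, define $v^{0}$ by evaluating \eqref{defvelocity} at $t=0$, so that (because $N$ is large) $(\zeta^{0},v^{0})^T\in X^{s}$ and the non-vanishing depth condition \eqref{depthcond} is inherited. Theorem \ref{localexistence} then supplies a unique solution $U_{xB}=(\zeta_{xB},v_{xB})^T\in X^{s}_{T_{1}}$ on some time interval $[0,T_{1}/\sqrt{\eps}]$ with $T_{1}>0$ depending only on $|U^{0}|_{X^{s}}$ and $h_{\min}^{-1}$, hence independent of $\eps$.

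Next I would invoke Proposition \ref{consistency}: defining $v$ from $\psi$ through \eqref{defvelocity}, the pair $\widetilde{U}:=(\zeta,v)^T$ satisfies the extended Boussinesq system \eqref{boussinesq} up to a residual $\widetilde{F}=(0,R)^T$ with
\begin{equation*}
\bigl\Vert \widetilde{F}\bigr\Vert_{L^{\infty}([0,T_{0}];X^{s})}\le C\eps^{3},
\end{equation*}
the constant $C$ depending on $h_{\min}^{-1}$ and on $(\zeta,\psi')$ in $H^{s+N}$, and $T_{0}$ being the window on which the Euler solution exists (which by assumption is uniform in $\eps$). Setting $T=\min(T_{0},T_{1})$ and applying the stability Theorem \ref{stability} with this $\widetilde{U}$ and $\widetilde{F}$, I observe that $U_{xB}$ and $\widetilde{U}$ share the same initial data $(\zeta^{0},v^{0})^T$, so that ${\bf U}_{\mid t=0}=0$. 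The stability inequality then yields, for any $0\le t\le T/\sqrt{\eps}$,
\begin{equation*}
\bigl\vert (\zeta,v)-(\zeta_{xB},v_{xB})\bigr\vert_{L^{\infty}([0,t];X^{s})}\;\le\;\widetilde{C}\,t\,\bigl\vert\widetilde{F}\bigr\vert_{L^{\infty}([0,t];X^{s})}\;\le\;\widetilde{C}\,C\,\eps^{3}\,t,
\end{equation*}
which at $t\le T/\sqrt{\eps}$ gives the advertised $\lesssim \eps^{5/2}$ bound.

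The most delicate step will be arranging the regularity accounting so that Theorem \ref{stability} can be applied. That theorem demands $\widetilde{U}\in C([0,T/\sqrt{\eps}];X^{s+1})$, and implicitly enough smoothness to justify the energy estimate of Proposition \ref{prop1} for $\partial_{t}\widetilde{U}$; this is the reason the hypothesis is stated in $H^{s+N}$ for some (undetermined) large $N$ rather than in $H^{s+2}$. Concretely, I would trace through the expansion \eqref{psi'} and the control \eqref{control-of-R3} to fix a value of $N$ (something on the order of $N\ge 8$ suffices there; stability adds one more derivative) that simultaneously (i) guarantees the $\eps^{3}$ bound on $\widetilde{F}$ via Proposition \ref{consistency}, and (ii) ensures $\widetilde{U}\in X^{s+1}$, so that the constant $\widetilde{C}$ in Theorem \ref{stability} is genuinely uniform in $\eps$. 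The rest of the argument is bookkeeping: matching the existence times, observing that both $T_{0}$ and $T_{1}$ are $\eps$-independent, and absorbing all constants into the final $C$.
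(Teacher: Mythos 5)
Your proposal is correct and follows exactly the route the paper takes: local existence (Theorem \ref{localexistence}) for the first point, consistency (Proposition \ref{consistency}) to produce the $\mathcal{O}(\eps^3)$ residual, and stability (Theorem \ref{stability}) with zero initial error to conclude. In fact the paper's own proof is only a three-line sketch of this same scheme, so your additional bookkeeping on the choice of $N$ and the matching of existence times is a faithful (and more explicit) elaboration rather than a different argument.
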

\begin{proof}
The first point is provided by  the local existence result Theorem \ref{localexistence}. Thanks to Proposition \ref{consistency}, then the solution of the water wave equations $(\zeta,v)^T$ solve our model \eqref{boussinesq} up to a residual $R$ of order $\eps^3$. The error estimation then follows from the stability Theorem \ref{stability}.
\end{proof}
\section{Solitary Waves}\label{solitary-approx}
\subsection{Explicit Solitary Wave Solution of the extended Boussinesq system}
Solitary waves were initially discovered in shallow water by J.S. Russell during his experiments to design a more dynamic canal boat~\cite{OD03}. Many partial differential equations have been derived in the literature to model the solitary wave observed by Russell. Such models are commonly known as the Korteweg-de Vries (KdV) scalar equation for a unidirectional flow or the coupled Boussinesq and Green-Naghdi evolution equations. These famous nonlinear and dispersive models describe the shallow water waves and admit explicit families of solitary wave solutions~\cite{Bous1872,Ray1876,KDV1895,Serre53,Chen98}. 
The explicit solitary solutions of different nonlinear PDE's can be calculated using many methods. One of these methods is replacing the partial differential equation by an ordinary one (ODE) and thus one can look for explicit solutions in terms of particular functions. This replacement can be done by setting a reference traveling wave and hence one look for traveling-wave solutions. In this section, we seek the explicit solution of traveling waves for the extended Boussinesq system. Let us recall that the extended Boussinesq system that we are considering can be written as:
\begin{equation}\label{boussinesq2}
\left\{
\begin{array}{lcl}
\displaystyle\partial_t\zeta+\partial_x(hv)=0\vspace{1mm}\; ,\\
\displaystyle (  1+\eps\mathcal{T}[\zeta]+\eps^2\mathfrak{T} )\partial_t v + \partial_x\zeta+\eps  v \partial_x v   +\eps^2 \mathcal{Q}v =\mathcal{O} (\eps^3) \; ,
\end{array}
\right.
\end{equation}
where $h(t,x)=1+\eps\zeta(t,x)$ and denote by
\begin{equation}\label{exp2}
\mathcal{T}[\zeta]w =-\frac{1}{3h}\partial_x\big((1+3\eps\zeta)\partial_x w\big)=-\frac{1}{3}(1-\eps\zeta)\partial_x\big((1+3\eps\zeta)\partial_x w\big)+\mathcal{O}(\eps^3), \; \mathfrak{T} w = -\frac{1}{45}\partial_x^4w ,  \; \mathcal{Q}v = -\frac{1}{3}\partial_x\big(vv_{xx}-v_x^2\big) \; .
\end{equation}
In order to find solitary wave solutions of the extended Boussinesq system~\eqref{boussinesq2}, we seek solutions in the form of the traveling wave $\zeta(t,x)=\zeta_c(x-ct)$ and $v(t,x)=v_c(x-ct)$ with $\displaystyle{\lim_{|x| \rightarrow \infty} |(\zeta_c,v_c)|(x)=0}$ where the constant $c \in \mathbb{R}$ is the velocity of the solitary wave. Plugging the above Ansatz into eq.~\eqref{boussinesq2} yields:
\begin{equation}\label{boussinesq3}
\left\{
\begin{array}{lcl}
\displaystyle -c\zeta^{'}_c+(h_c v_c)'=0\vspace{1mm}\; ,\\
\displaystyle -c v^{'}_c +\frac{\eps c}{3}\Big((1+3\eps \zeta_c)v^{''}_c\Big)'-\frac{\eps^2 c}{3}\zeta_c v^{'''}_c +\frac{\eps^2 c}{45} v_c^{(5)} +\zeta^{'}_c +\frac{\eps}{2} (v_c^2)' = \frac{\eps^2}{3}\big(v_c v_c^{''} -(v^{'}_c)^2\big)' \ \; .
\end{array}
\right.
\end{equation}
We may now integrate and, using the vanishing condition at infinity to set the integration constant,
we deduce from the first equation:
\begin{equation}\label{soleq1}
-c\zeta_c+ h_c v_c=0\;.
\end{equation}
Using~\eqref{soleq1}, one can deduce that $v_c^{'''}=c \zeta_c^{'''} + \mathcal{O} (\eps)$.  

One can also check the following identity $\zeta_c \zeta_c^{'''}=(\zeta_c \zeta_c^{''})'-\dfrac{1}{2}\big((\zeta_c^{'})^2\big)'$ is true. Using the latter identities into the second equation of~\eqref{boussinesq3}, we may now integrate and, using the vanishing condition at infinity to set the integration constant one can deduce:
\begin{equation}\label{soleq2}
-c v_c +\frac{\eps}{2} v_c^2 +\zeta_c=-\frac{\eps c}{3}v^{''}_c-\eps^2 c\zeta_c v_c^{''}+\frac{\eps^2 c^2}{3}\zeta_c \zeta^{''}_c -\frac{\eps^2 c^2}{6} (\zeta_c^{'})^2-\frac{\eps^2 c}{45} v_c^{(4)}+ \frac{\eps^2}{3}v_c v_c^{''} -\frac{\eps^2}{3}(v^{'}_c)^2.
\end{equation}
One can deduce from~\eqref{soleq1} the following identity: 
\begin{equation}\label{v''exp}
v_c=c \zeta_c -\eps c \zeta_c^2 + \mathcal{O}(\eps^2)\;.
\end{equation}
Using~\eqref{soleq1} into the \emph{l.h.s} of~\eqref{soleq2} and~\eqref{v''exp} into the \emph{r.h.s} of~\eqref{soleq2}, withdrawing all terms of order $\mathcal{O}(\eps^3)$ one can deduce the following equation:
\begin{equation}\label{soleq22}
\zeta_c-\dfrac{c^2\zeta_c}{2(1+\eps\zeta_c)^2}(2+\eps\zeta_c)=-\frac{\eps c^2}{3}\zeta_c^{''} + \frac{\eps^2 c^2}{6}(\zeta_c^{'})^2 +\frac{\eps^2 c^2}{3} \zeta_c \zeta_c^{''}-\frac{\eps^2 c^2}{45} \zeta_c^{(4)}.
\end{equation}
Multiplying~\eqref{soleq22} by $\zeta_c^{'}$ and integrating once again yields,
\begin{equation}\label{soleq23}
\dfrac{\zeta_c^2}{2} \Big( 1-\dfrac{c^2}{1+\eps \zeta_c} \Big)= \frac{\eps c^2}{6} (\eps\zeta_c -1)(\zeta_c^{'})^2-\frac{\eps^2 c^2}{45} \zeta_c^{'''}\zeta_c^{'} + \frac{\eps^2 c^2}{90} (\zeta_c^{''})^2.
\end{equation}
The equation~\eqref{soleq23} is a third order non linear ordinary differential equation. When dropping the $\eps^2$ terms on the \emph{r.h.s} of~\eqref{soleq23}, one gets the analogous ODE for the GN equation which exhibits the analytical solitary wave solution defined in~\eqref{solGN}.
A careful examination reveals that the equation~\eqref{soleq23} does not admit an explicit solution in any appropriate method. In~\cite{Matsuno2015}, the author studied solitary wave solutions of the Hamiltonian formulation of the extended Green-Naghdi equations by performing a singular perturbation analysis. In the latter paper, Matsuno mentioned that his inspection also reveals that the obtained third-order nonlinear differential equation would not have analytical solutions. The aim was to find an exact solitary wave solution of equation~\eqref{soleq23}. However, analytical approaches might not be applied to many nonlinear problems. The explicit solution of the extended Boussniesq~\eqref{boussinesq2} system remain an open problem. An alternative approach is to consider the numerical solution of the equation~\eqref{soleq23}. Therefore, we validate the asymptotic extended Boussinesq model~\eqref{boussinesq2} by comparing its travelling wave solution (computed numerically) with corresponding solution to the full Euler equations, computed using fast and accurate algorithms~\cite{DC14,Tanaka86}. 
\subsection{Numerical Solitary Wave Solution of the extended Boussinesq system}\label{NumSWSec}
In the previous section, the emphasis was on finding an analytic solution for the extended Boussinesq system of equations of the form of a solitary wave. However, many differential equations, especially nonlinear ones of high order, does not admit exact explicit solutions. Instead, numerical solutions must be considered as an alternative way of dealing with these equations.
To this end we compute the solution of~\eqref{soleq23} numerically by employing the Matlab solver \texttt{ode45}. We compare the obtained solutions with the solutions of water-waves equations. 
The latter is computed using the Matlab script of Clamond and Dutykh~\cite{CD2013} where they introduce a fast and precise approach for computing solitary waves solution. 
We compute the solitary waves for our model with three values of velocity, namely $c=1.025, \ c=1.01$ and $c=1.002$. 
In fact, the Matlab script in~\cite{CD2013} offer fast and accurate results but limited to realtively small velocities. We compare the obtained solutions with the ones corresponding to the full Euler system (numerically computed), the original Green-Naghdi system ($\zeta_{GN}$), the Boussinesq system ($\zeta_{B}$) and the KdV equation ($\zeta_{KdV}$). The explicit solution of the original Green-Naghi model has been initially obtained by Serre in~\cite{Serre53} and later on by Su and Gardner~\cite{SuGardner69}:
\begin{equation}\label{solGN}
\eps \zeta_{GN}(x)=(c^2-1)\  \text{sech}^2 \Big(\sqrt{\dfrac{3(c^2-1)}{4c^2 \eps}}\ x\Big)=\eps c^2 \zeta_{KdV}(x)=\eps c^2 \zeta_{B}(x)\;.
\end{equation}
The waves are rescaled so that the Korteweg-de Vries and Boussinesq solutions  do not depend on $c$. Consistently, we set $\eps=1$. By the convergence theorem, the above solutions provide good approximations of the traveling waves of the exact water-waves equations, when $c-1 \approx \eps \ll 1$, that is in the weakly nonlinear regime. 

In fact, in figure~\ref{SWcomp}, one can see clearly as $c-1 \rightarrow 0$ and after re-scaling, the solitary waves tend towards the KdV solution $(\zeta_{KdV})$. Moreover, when zooming in, one can see that the the full Euler system (water-waves) solution is in better agreement with the solution of the extended Boussinesq model rather than the Green-Naghdi one. 
\begin{figure}[H]
	\centering
	\subcaptionbox{Re-sized waves, $c=1.025,\ 1.01,\ 1.002$}
	{\includegraphics[width=0.47\textwidth]{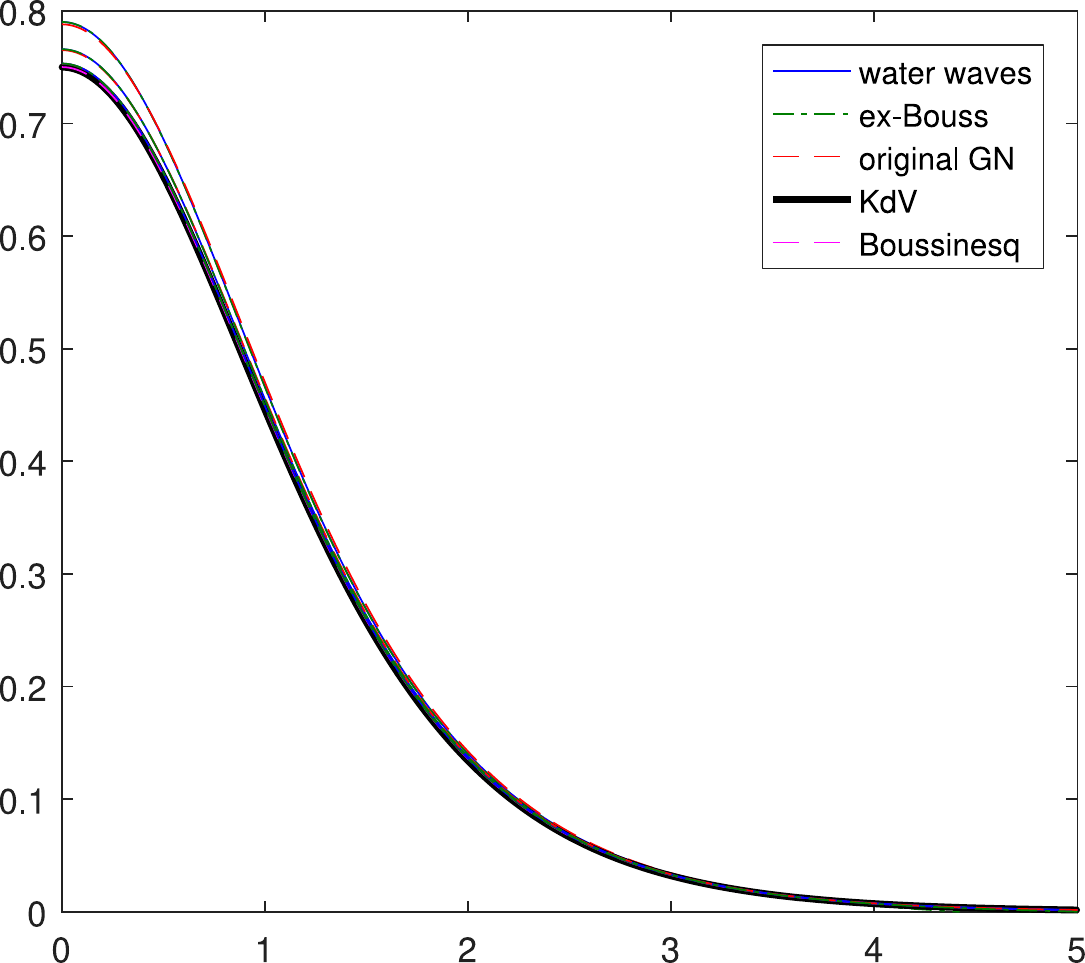}}
	\subcaptionbox{Zoom in}
	{\includegraphics[width=0.48\textwidth]{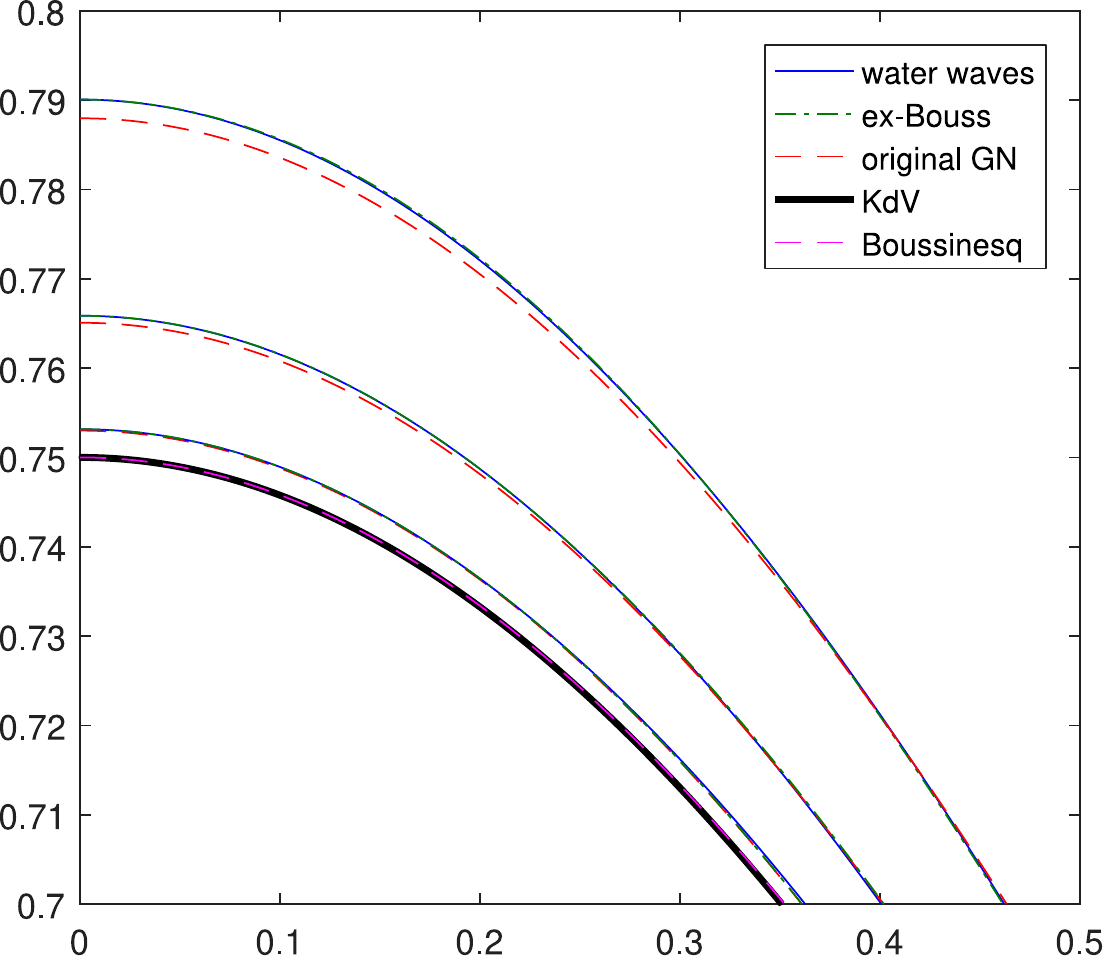}}
	\captionsetup{justification=centering}
	\caption{Comparison of the solitary waves solutions.}
	\label{SWcomp}
\end{figure}
%
%
%
In figure~\ref{convratefig}, we plot in a log-log scale the normalized $l^2$-norm of the difference between the solitary wave solutions of the approximate models and the water-waves solution. The error is computed for different values of $c$.
The extended Bossinesq model exhibit a better convergence rate (quadratic) when compared to the original Green-Naghdi model (linear). This highlight the fact that extended Boussinesq model have a better approximate solution.
\begin{figure}[H]
\includegraphics[scale=0.8]{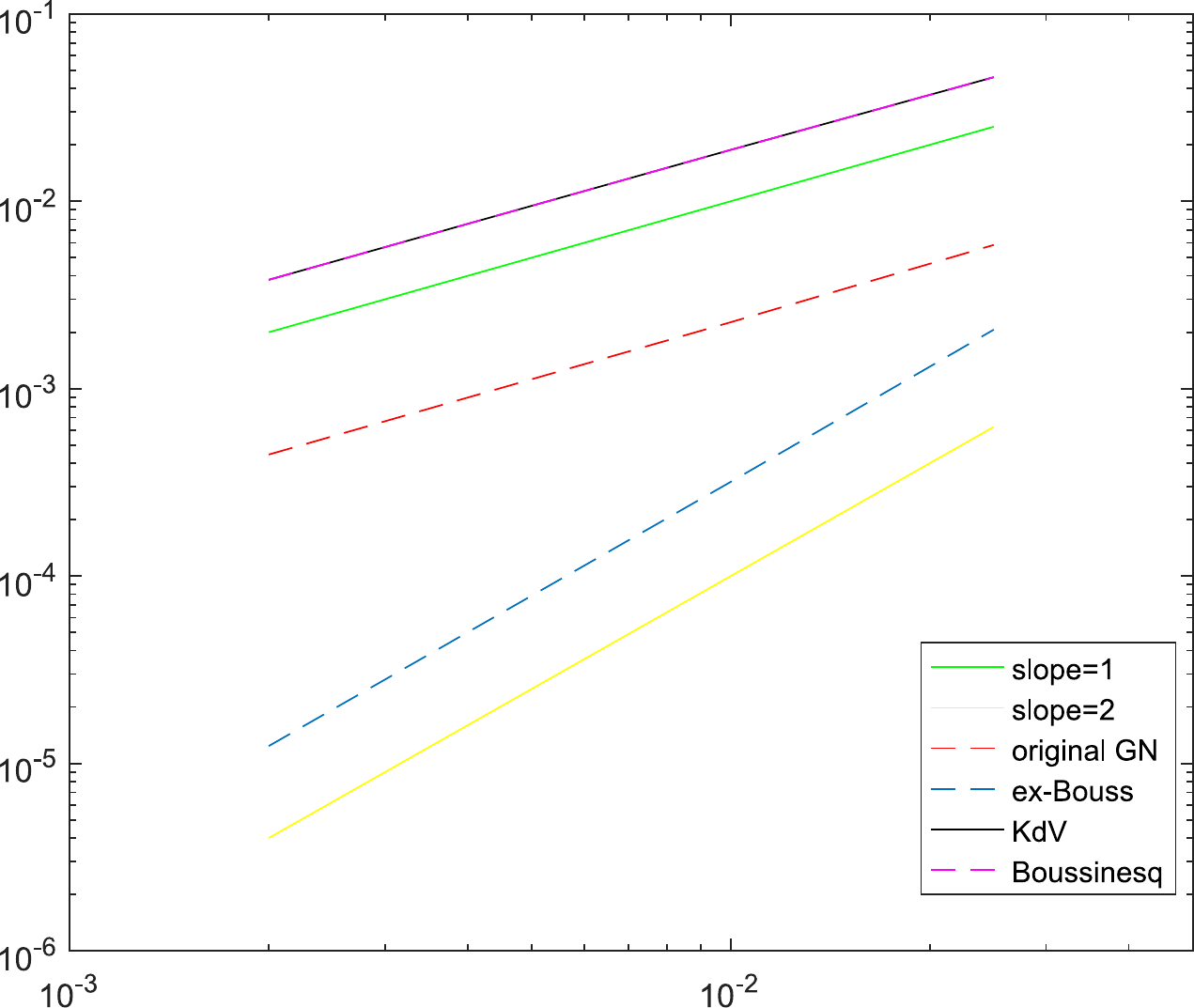}
\caption{Errors as a function of
$c-1$ (log-log plot).}
\label{convratefig}
\end{figure}
\section{Explicit solution with correctors of order $\mathcal{O}(\eps^3)$ for the extended Boussinesq equations}\label{explicit-solitary}
Another approach of dealing with nonlinear PDE's when looking for analytical exact solution is finding instead an explicit solution with correctors. Explicit solutions with correctors for asymptotic water waves models have been obtained in~\cite{IM14,HAI20}. Actually, $H^s$-consistent solutions are obtained to the models in the variable topography case using the analytic solution of the model in the flat topography configuration. In what follows, we find an explicit solution with correctors of order $\mathcal{O}(\eps^3)$ for the extended Boussinesq model~\eqref{boussinesq2} and validate the result numerically.

We start by defining an $H^s$-consistent solution or in other words explicit solution with correctors of order $\mathcal{O}(\eps^3)$.
\begin{definition}
A family $(\zeta,v)$ is $H^s$-consistent on $[0, T/\sqrt{\eps}]$ for the extended Boussinesq equations~\eqref{boussinesq2}, if
\begin{equation}\label{boussHsdef}
\left\{
\begin{array}{lcl}
\displaystyle\partial_t\zeta+\partial_x(hv)=\eps^3 r_1 \vspace{1mm}\; ,\\
\displaystyle (  1+\eps\mathcal{T}[h]+\eps^2\mathfrak{T} )\partial_t v + \partial_x\zeta+\eps  v \partial_x v   +\eps^2 \mathcal{Q}v =\eps^3 r_2 \; ,
\end{array}
\right.
\end{equation}
with $(r_1, r_2)$ bounded in $\Big(L^{\infty} \big([0,\frac{T}{\sqrt{\eps}}], H^s(\mathbb{R})\big)\Big)^2$.
\end{definition}
The standard Boussinesq system can be easily obtained form the extended Boussinesq system~\eqref{boussinesq2} by dropping all terms of order $\mathcal{O}(\eps^2)$. Thus the standard Boussinesq system can be written as:
\begin{equation}\label{standbouss}
\left\{
\begin{array}{lcl}
\displaystyle\partial_t\zeta+\partial_x(hv)=0\vspace{1mm}\; ,\\
\displaystyle \partial_t v -\frac{\eps}{3} \partial_x^2 \partial_t v+ \partial_x\zeta+\eps  v \partial_x v  =\mathcal{O} (\eps^2) \; .
\end{array}
\right.
\end{equation}
\subsection{Explicit solution of the standard Boussinesq system~\eqref{standbouss}}
The standard Boussinesq system enjoys a well known explicit solution of solitary traveling wave $(\zeta_1,v_1)$ of the form:
\begin{equation}\label{stdbousssol}
\left\{
\begin{array}{lcl}
\displaystyle\zeta_{1}(t,x)= \alpha \ \text{sech}^2 \Big(k \ (x-ct)\Big)\vspace{1mm}\; ,\\
\displaystyle v_{1}(t,x)=\dfrac{ c \zeta_1(t,x)}{1+\eps \zeta_1(t,x)}\; ,
\end{array}
\right.
\end{equation}
where $k=\sqrt{\dfrac{3\alpha}{4}}$ and $c=\sqrt{\dfrac{1}{1-\alpha\eps}}$ and $\alpha$ is an arbitrary chosen constant. This explicit solitary wave was already introduced in equation \eqref{solGN} in the previous section~\ref{NumSWSec}. As shown in figure~\ref{SWcomp}, this solution is in good agreement with the water waves solutions in the weakly nonlinear regime.
\begin{theorem}\label{Hsconstheo}
Let $(\zeta_1, v_1)$ be a solution of the standard Boussinesq system~\eqref{standbouss} and $(\zeta_2,v_2)$ solution of the linear equations below:
\begin{equation}\label{linsyst}
\left\{
\begin{array}{lcl}
\displaystyle\partial_t \zeta_2 +\partial_x v_2=0\vspace{1mm}\; ,\\
\displaystyle \partial_t v_2 +\partial_x \zeta_2=f(\zeta_1,v_1) \; ,
\end{array}
\right.
\end{equation}
with 
\begin{equation}\label{gdef}
f(\zeta_1,v_1)=\partial_x \zeta_1 \partial_x \partial_t v_1 +\dfrac{2}{3}\zeta_1 \partial_x^2 \partial_t v_1 + \dfrac{1}{45}\partial_x^4 \partial_t v_1  +\dfrac{1}{3} \partial_x\big(v_1 (v_1)_{xx}-(v_1)_x^2\big),\end{equation}
then $(\zeta,v)=(\zeta_1,v_1)+\eps^2(\zeta_2,v_2)$ is $H^s$-consistent with the extended Boussinesq system~\eqref{boussinesq2}.
\end{theorem}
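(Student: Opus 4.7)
The plan is a direct order-by-order verification. I would substitute $(\zeta,v)=(\zeta_1,v_1)+\eps^2(\zeta_2,v_2)$ into the two equations of the extended Boussinesq system \eqref{boussinesq2}, expand in powers of $\eps$, and check that all terms of orders $\eps^0$, $\eps^1$, $\eps^2$ cancel so that only an $\eps^3 r_j$ remainder survives with $r_j$ bounded in $H^s$. The key observation is that the corrector is already weighted by $\eps^2$, so everywhere it enters a nonlinear or dispersive piece beyond the linear leading order it produces only contributions of order $\eps^3$ or higher, which go into the remainder.

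For the continuity equation, with $h=1+\eps\zeta=h_1+\eps^3\zeta_2$ where $h_1=1+\eps\zeta_1$, I expand
\begin{equation*}
\partial_t\zeta+\partial_x(hv)=\bigl[\partial_t\zeta_1+\partial_x(h_1v_1)\bigr]+\eps^2\bigl[\partial_t\zeta_2+\partial_x v_2\bigr]+\eps^3\partial_x(\zeta_1v_2+\zeta_2v_1)+\eps^5\partial_x(\zeta_2v_2).
\end{equation*}
The first bracket vanishes because $(\zeta_1,v_1)$ solves \eqref{standbouss}, and the second by the first equation of \eqref{linsyst}, leaving $r_1=\partial_x(\zeta_1v_2+\zeta_2v_1)+\eps^2\partial_x(\zeta_2v_2)$. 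For the momentum equation, I first use \eqref{exp2} to write $\eps\mathcal{T}[\zeta]w=-\tfrac{\eps}{3}\partial_x^2w-\eps^2\partial_x(\zeta\partial_xw)+\tfrac{\eps^2}{3}\zeta\partial_x^2w+\mathcal{O}(\eps^3)$. Substituting the ansatz, using the identity $-\partial_x(\zeta_1\partial_x\partial_tv_1)+\tfrac{1}{3}\zeta_1\partial_x^2\partial_tv_1=-(\partial_x\zeta_1)(\partial_x\partial_tv_1)-\tfrac{2}{3}\zeta_1\partial_x^2\partial_tv_1$, and regrouping produces
\begin{align*}
&(1+\eps\mathcal{T}[\zeta]+\eps^2\mathfrak{T})\partial_tv+\partial_x\zeta+\eps v\partial_xv+\eps^2\mathcal{Q}v\\
&\quad=\bigl[\partial_tv_1-\tfrac{\eps}{3}\partial_x^2\partial_tv_1+\partial_x\zeta_1+\eps v_1\partial_xv_1\bigr]+\eps^2\bigl[\partial_tv_2+\partial_x\zeta_2-f(\zeta_1,v_1)\bigr]+\eps^3r_2,
\end{align*}
where the four terms in \eqref{gdef} match exactly, with opposite sign, the four $\eps^2$-contributions $-(\partial_x\zeta_1)(\partial_x\partial_tv_1)$, $-\tfrac{2}{3}\zeta_1\partial_x^2\partial_tv_1$, $-\tfrac{1}{45}\partial_x^4\partial_tv_1$, and $-\tfrac{1}{3}\partial_x(v_1(v_1)_{xx}-(v_1)_x^2)$ arising from $\eps\mathcal{T}[\zeta]\partial_tv$ and $\eps^2\mathcal{Q}v$. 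The first bracket vanishes by \eqref{standbouss} and the second by \eqref{linsyst}.

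It remains to verify that $r_1$ and $r_2$ are bounded in $L^\infty([0,T/\sqrt{\eps}];H^s)$. The explicit form of $r_2$ collects the action of $-\tfrac{1}{3}\partial_x^2\partial_t$ on $\eps^2v_2$, the $\mathcal{O}(\eps^3)$ terms in the expansion \eqref{exp2}, the cross terms in $\eps v\partial_xv$ and in $\eps^2\mathcal{Q}v$ involving $v_2$, and the $\eps^2\zeta_2$-contribution in the $\eps^2$-piece of $\mathcal{T}[\zeta]\partial_tv$; each is a product of smooth functions built from $(\zeta_1,v_1,\zeta_2,v_2)$ and finitely many of their spatial derivatives, bounded in $H^s$ by standard tame product estimates. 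Enough regularity of $(\zeta_2,v_2)$ is obtained because the linear system \eqref{linsyst} is a one-dimensional wave equation driven by the smooth forcing $f(\zeta_1,v_1)$, so classical energy estimates provide $(\zeta_2,v_2)\in L^\infty([0,T/\sqrt{\eps}];H^{s+N})$ with at most polynomial growth in $t$ that is absorbed into the uniform constant. The only real (but purely algebraic) obstacle is the careful order-by-order bookkeeping in $\mathcal{T}[\zeta]$: one must check that its $\eps^2$-piece depends only on $(\zeta_1,v_1)$, so that every contribution of the corrector $\zeta_2$ is pushed to the $\eps^3$ remainder.
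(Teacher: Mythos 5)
Your proposal is correct and follows essentially the same route as the paper's proof: direct substitution of the ansatz $(\zeta,v)=(\zeta_1,v_1)+\eps^2(\zeta_2,v_2)$, order-by-order cancellation using the expansion $\eps\mathcal{T}[\zeta]w=-\tfrac{\eps}{3}\partial_x^2w-\eps^2\partial_x(\zeta\partial_xw)+\tfrac{\eps^2}{3}\zeta\partial_x^2w+\mathcal{O}(\eps^3)$, and identification of $f(\zeta_1,v_1)$ as (minus) the $\eps^2$-coefficients produced by $\mathcal{T}$, $\mathfrak{T}$ and $\mathcal{Q}$ acting on $(\zeta_1,v_1)$. You in fact supply more detail than the paper does on the explicit form and $H^s$-boundedness of the remainders $r_1,r_2$, which the paper simply asserts.
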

\begin{proof} First, we would like to mention that we denote by $\mathcal{O}(\eps)$ any family of functions $(f_\eps)_{0<\eps<1}$ such that $(\dfrac{1}{\eps} f_{\eps})_{0<\eps<1}$ remains bounded in $L^{\infty} \big([0,\frac{T}{\sqrt{\eps}}], H^r(\mathbb{R})\big)$, for possibly different values of $r$. We may now proceed in proving the stated result.

If $\zeta$ and $ v$ such that $(\zeta,v)=(\zeta_1,v_1)+\eps^2(\zeta_2,v_2)$  solve the first equation of~\eqref{boussinesq2} up to $\mathcal{O}(\eps^3)$ terms, then
\begin{equation*}
\partial_t \zeta_{1} +\partial_x ((1+\eps \zeta_1) v_1) + \eps^2 \partial_t \zeta_2 +\eps^2 \partial_x v_2 =\mathcal{O}(\eps^3).
\end{equation*}
The first equation of~\eqref{boussinesq2} is satisfied up to $\mathcal{O}(\eps^3)$ terms if and only if:
\begin{equation*} \eps^2 \partial_t \zeta_2 +\eps^2 \partial_x v_2 =\mathcal{O}(\eps^3).
\end{equation*}
Therefore one can take:
\begin{equation*}  \partial_t \zeta_2 + \partial_x v_2 =0.
\end{equation*}
Now, let us recall that the second equation of~\eqref{boussinesq2} can be written as:
\begin{equation*} \partial_t v -\frac{\eps}{3} \partial_x^2\partial_t v-\eps^2\partial_x \zeta \partial_x \partial_t v - \frac{2\eps^2}{3}\zeta \partial_x^2 \partial_t v - \frac{\eps^2}{45}\partial_x^4 \partial_t v+ \partial_x\zeta+\eps  v \partial_x v   -\frac{\eps^2}{3} \partial_x\big(vv_{xx}-v_x^2\big) = \mathcal{O}(\eps^3).
\end{equation*}
We seek $(\zeta_2,v_2)$ such that if $(\zeta,v)=(\zeta_1,v_1)+\eps^2(\zeta_2,v_2)$ and $(\zeta_1,v_1)$ solve the standard Boussinesq equations~\eqref{standbouss}, then the second equation of~\eqref{boussinesq2} is satisfied up to $\mathcal{O}(\eps^3)$ terms if and only if:
\begin{equation*} \eps^2 \partial_t v_2 +\eps^2 \partial_x \zeta_2 =\eps^2 f(\zeta_1,v_1),
\end{equation*}
with $f(\zeta_1,v_1)=\partial_x \zeta_1 \partial_x \partial_t v_1 +\dfrac{2}{3}\zeta_1 \partial_x^2 \partial_t v_1 + \dfrac{1}{45}\partial_x^4 \partial_t v_1  +\dfrac{1}{3} \partial_x\big(v_1 (v_1)_{xx}-(v_1)_x^2\big)$. Therefore, this yields
\begin{equation*}  \partial_t v_2 + \partial_x \zeta_2 = f(\zeta_1,v_1).
\end{equation*}
Hence, the result is directly obtained given the conditions on $\zeta_2$ and $v_2$ in the theorem statement.
\end{proof}
\subsection{Analytic solution for the linear system~\eqref{linsyst}}
In this section, we find the analytic solution for the two transport equations of system~\eqref{linsyst}.
Lets consider first the initial value problem of~\eqref{linsyst}:
\begin{equation}\label{IVPlinsyst}
\left\{
\begin{array}{lcl}
\displaystyle\partial_t \zeta_2 +\partial_x v_2=0\vspace{1mm}\; , \hspace{3cm} \text{if} \ x \in \mathbb{R}, t >0,\\
\displaystyle \partial_t v_2 +\partial_x \zeta_2=f(t,x), \; \hspace{2.25cm} \text{if} \ x \in \mathbb{R}, t >0,\\
\zeta_2(0,x)=\zeta_2^0(x), \ \ v_2(0,x)=v_2^0(x) \hspace{0.45cm} \text{if} \ x \in \mathbb{R},
\end{array}
\right.
\end{equation}
where $\zeta_2^0$ and $v_2^0$ are both given in $C^\infty(\mathbb{R})$. One can equivalently check the following:
\begin{equation}\label{IVPlinsyst2}
\left\{
\begin{array}{lcl}
\displaystyle\partial_t (\zeta_2+v_2) +\partial_x (\zeta_2+v_2)=f(t,x)\vspace{1mm}\; , \hspace{1.95cm} \text{if} \ x \in \mathbb{R}, t >0,\\
\displaystyle \partial_t (\zeta_2-v_2) -\partial_x (\zeta_2-v_2)=-f(t,x), \; \hspace{1.7cm} \text{if} \ x \in \mathbb{R}, t >0,\\
\zeta_2(0,x)=\zeta_2^0(x), \ \ v_2(0,x)=v_2^0(x) \hspace{2.2cm} \text{if} \ x \in \mathbb{R},
\end{array}
\right.
\end{equation}
The analytical solution of both transport equations of system~\eqref{IVPlinsyst2} are:
\begin{equation*}
\zeta_2+v_2=(\zeta_2^0+v_2^0)(x-t)+ \int_0^t f(s,x-t+s) ds,
\end{equation*}
and 
\begin{equation*}
\zeta_2-v_2=(\zeta_2^0-v_2^0)(x+t)- \int_0^t f(s,x+t-s) ds.
\end{equation*}
Thus, one can easily deduce that the analytic solutions of system~\eqref{IVPlinsyst} are given by
\begin{equation}\label{zeta2def}
\zeta_2=\dfrac{1}{2}\Big[(\zeta_2^0+v_2^0)(x-t)+(\zeta_2^0-v_2^0)(x+t)+ \int_0^t f(s,x-t+s) ds-\int_0^t f(s,x+t-s) ds\Big],
\end{equation}
and
\begin{equation}\label{v2def}
v_2=\dfrac{1}{2}\Big[(\zeta_2^0+v_2^0)(x-t)-(\zeta_2^0-v_2^0)(x+t)+ \int_0^t f(s,x-t+s) ds+ \int_0^t f(s,x+t-s) ds \Big].
\end{equation}
\subsection{Explicit solution with correctors for the system of equations~\eqref{boussinesq2}.}
In what follows, we prove that the extended Boussinesq system~\eqref{boussinesq2} enjoys an explicit solution with correctors of order $\mathcal{O}(\eps^3)$.
\begin{theorem}\label{Theo}
Let $(\zeta_1,v_1)$ given by the expressions in~\eqref{stdbousssol} and $f(t,x)$ as defined in~\eqref{gdef}. Lets also consider the initial condition $(
\zeta_0,v_0)=(\zeta_1(0,x),v_1(0,x))+\eps^2(\zeta_2^0,v_2^0)$ where $\zeta_2^0$ and $v_2^0$ are both given in $C^\infty(\mathbb{R})$. Then, the family ($\zeta,v)$ with
\begin{equation}\label{zetadef}
\zeta=\zeta_1 + \dfrac{\eps^2}{2}\Big[(\zeta_2^0+v_2^0)(x-t)+(\zeta_2^0-v_2^0)(x+t)+ \int_0^t f(s,x-t+s) ds-\int_0^t f(s,x+t-s) ds\Big],
\end{equation}
and
\begin{equation}\label{vdef}
v= v_1+\dfrac{\eps^2}{2}\Big[(\zeta_2^0+v_2^0)(x-t)-(\zeta_2^0-v_2^0)(x+t)+ \int_0^t f(s,x-t+s) ds+ \int_0^t f(s,x+t-s) ds \Big],
\end{equation}
is an explicit solution with correctors of order $\mathcal{O}(\eps^3)$ on $[0,\frac{T}{\sqrt{\eps}}]$ for the extended Boussinesq system~\eqref{boussinesq2}.
\end{theorem}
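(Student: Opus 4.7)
My plan is to reduce Theorem \ref{Theo} directly to the previous two ingredients: the consistency result of Theorem \ref{Hsconstheo} and the explicit characteristic formulas \eqref{zeta2def}--\eqref{v2def}. The strategy is simply to show that the pair $(\zeta,v)$ defined by \eqref{zetadef}--\eqref{vdef} has exactly the decomposition $(\zeta,v) = (\zeta_1,v_1) + \eps^2(\zeta_2,v_2)$ that Theorem \ref{Hsconstheo} requires, with $(\zeta_1,v_1)$ a bona fide solution of the standard Boussinesq system \eqref{standbouss} and $(\zeta_2,v_2)$ the solution of the linear IVP \eqref{linsyst} with initial data $(\zeta_2^0,v_2^0)$.

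The first step would be a direct check: substitute the closed-form solitary wave \eqref{stdbousssol} into \eqref{standbouss} and verify that both equations hold. This amounts to algebra involving $\operatorname{sech}^2$ and the identity $k = \sqrt{3\alpha/4}$, $c^2(1-\alpha\eps)=1$; nothing conceptual is at stake. Next I would read off from \eqref{zetadef}--\eqref{vdef} the explicit definition
\begin{equation*}
\zeta_2 \;=\; \tfrac{1}{2}\bigl[(\zeta_2^0{+}v_2^0)(x{-}t) + (\zeta_2^0{-}v_2^0)(x{+}t) + I_-(t,x) - I_+(t,x)\bigr],
\end{equation*}
\begin{equation*}
v_2 \;=\; \tfrac{1}{2}\bigl[(\zeta_2^0{+}v_2^0)(x{-}t) - (\zeta_2^0{-}v_2^0)(x{+}t) + I_-(t,x) + I_+(t,x)\bigr],
\end{equation*}
with $I_{\pm}(t,x) = \int_0^t f(s,x\mp(t-s))\,ds$, and recall from the characteristics computation leading to \eqref{IVPlinsyst2} that this $(\zeta_2,v_2)$ is exactly the solution of \eqref{linsyst} with source $f(\zeta_1,v_1)$ and initial data $(\zeta_2^0,v_2^0)$. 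Applying Theorem \ref{Hsconstheo} with these $(\zeta_1,v_1)$ and $(\zeta_2,v_2)$ then yields $H^s$-consistency, and the match of initial data $(\zeta,v)_{|t=0}=(\zeta_1(0,\cdot),v_1(0,\cdot))+\eps^2(\zeta_2^0,v_2^0)$ is built into the formulas.

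The main technical obstacle I anticipate is showing that the remainders $r_1,r_2$ remain bounded in $L^\infty([0,T/\sqrt{\eps}];H^s(\R))$, since this is what is meant by ``$\mathcal O(\eps^3)$ correctors'' on that long time scale. Inspecting the proof of Theorem \ref{Hsconstheo}, the residual terms produced by the ansatz $(\zeta,v) = (\zeta_1,v_1)+\eps^2(\zeta_2,v_2)$ are polynomial expressions in $\zeta_1,v_1,\zeta_2,v_2$ and their derivatives, multiplied by at least $\eps^3$. The solitary wave profiles $\zeta_1,v_1$ are Schwartz-class functions of $x-ct$ and hence bounded in every $H^r(\R)$ uniformly in $t$. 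The critical point is to control $(\zeta_2,v_2)$: the free transport parts are estimated directly by translation invariance of the $H^s$ norm from the $C^\infty$ data $(\zeta_2^0,v_2^0)$, while the Duhamel integrals $I_{\pm}$ are controlled using $|I_\pm(t,\cdot)|_{H^s}\lesssim t\,\sup_{0\le s\le t}|f(s,\cdot)|_{H^s}$. Because $f(\zeta_1,v_1)$ involves only fixed-profile traveling waves, $|f(s,\cdot)|_{H^s}$ is bounded uniformly in $s$, so on the interval $[0,T/\sqrt{\eps}]$ the integrals grow at most like $T/\sqrt{\eps}$; the resulting contribution to the remainder is $\eps^3 \cdot \mathcal{O}(T/\sqrt{\eps}) = \mathcal O(\eps^{5/2})$, still consistent with the $H^s$-consistency definition. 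Combining these bounds delivers the uniform $L^\infty_t H^s_x$ control of $(r_1,r_2)$ and completes the proof.
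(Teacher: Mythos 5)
Your proposal is correct and follows essentially the same route as the paper's own (very terse) proof: identify $(\zeta_2,v_2)$ from \eqref{zetadef}--\eqref{vdef} as the characteristics solution \eqref{zeta2def}--\eqref{v2def} of the linear system \eqref{linsyst} and invoke Theorem \ref{Hsconstheo}. Your additional discussion of the secular growth of the Duhamel integrals on $[0,T/\sqrt{\eps}]$ (giving an effective $\mathcal{O}(\eps^{5/2})$ residual rather than a uniform $\mathcal{O}(\eps^{3})$) is a point the paper's proof does not address at all, and is a welcome extra precision.
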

\begin{proof}
Theorem~\ref{Hsconstheo}, gives the $H^s$ consistency result of $(\zeta,v)=(\zeta_1,v_1)+\eps^2(\zeta_2,v_2)$ with the extended Boussinesq system~\eqref{boussinesq2}, where $(\zeta_2,u_2)$ as given in~\eqref{zeta2def} and~\eqref{v2def} is a solution of the linear system~\eqref{linsyst}. Hence the result can be obtained easily.
\end{proof}
\section{Numerical validation}
In this section, we numerically validate the result of Theorem~\ref{Theo}. In fact, we consider the equations given by system~\eqref{boussinesq2} and we compute explicitly the solutions given by~\eqref{zetadef} and~\eqref{vdef}. Then, we compute the residues for both equations after substituting~\eqref{zetadef} and~\eqref{vdef} correspondingly. First we have to set the initial conditions $\zeta_2^0=v_2^0=\exp\Big(-\Big(\dfrac{3\pi x}{10}\Big)^2 \Big)$. We also choose the constant $\alpha=1$. The residues $R_1(\eps)$ and $R_2(\eps)$ of the first and second equation of the system~\eqref{boussinesq2} respectively, are defined as follow:
\begin{equation}\label{Residuesdef}
\left\{
\begin{array}{lcl}
\displaystyle R_1^p(\eps) = \| \partial_t \zeta +\partial_x(hv)\|_{p} \vspace{1mm}\; ,\\
R_2^p(\eps) =\| \displaystyle (  1+\eps\mathcal{T}[h]+\eps^2\mathfrak{T} )\partial_t v + \partial_x\zeta+\eps  v \partial_x v   +\eps^2 \mathcal{Q}v \|_{p} \; .
\end{array}
\right.
\end{equation}
where $p \in \{2,\infty\}$.
The residues $ R_1^p(\eps)$ and $ R_2^p(\eps)$ for $p=1$ and $p=\infty$ are computed for several values of $\eps$, namely $\eps=10^{-1}, \ 10^{-2},\ 10^{-3}, \ 10^{-4}$ and $10^{-5}$, at time $t=1$. The results are summarized in Table~\ref{ResTable} and Figures~\ref{R2curves} and~\ref{Rinfcurves} where we plot in a log-log scale the residues $R_1^{p}$ and $R_2^{p}$ for $p=1$ and $p=\infty$ in terms of $\eps$.
\begin{center}
\begin{table}[H]
\begin{tabular}{ | c | c | c || c | c | c |} 
\hline 
$\eps$ & $R_1^2(\eps)$ & $R_2^2(\eps)$ & $\eps$ & $R_1^{\infty}(\eps)$ & $R_2^{\infty}(\eps)$\\ [0.5ex] 
\hline
1E-1  & 2.70E-02 & 3.80E-03 & 1E-1  & 4.30E-03 & 4.81E-04   \\ 
1E-2  & 2.58E-05 & 2.96E-06 & 1E-2  & 4.17E-06 & 4.10E-07 \\ 
1E-3  & 2.57E-08 & 2.89E-09 & 1E-3  & 4.16E-09 & 4.12E-10 \\ 
1E-4  & 2.57E-11 & 2.88E-12 & 1E-4  & 4.16E-12 & 4.13E-13\\ 
1E-5  & 2.58E-14 & 2.90E-15 & 1E-5  & 4.33E-15 & 5.22E-16 \\ 
\hline
\end{tabular}
\vspace*{5mm}
\caption{The residues $R_1(\eps)$ and $R_2(\eps)$ for $p=2$ (left) and $p=\infty$ (right)}
\label{ResTable}
\end{table}
\end{center}
\begin{figure}[H]
\includegraphics[scale=0.7]{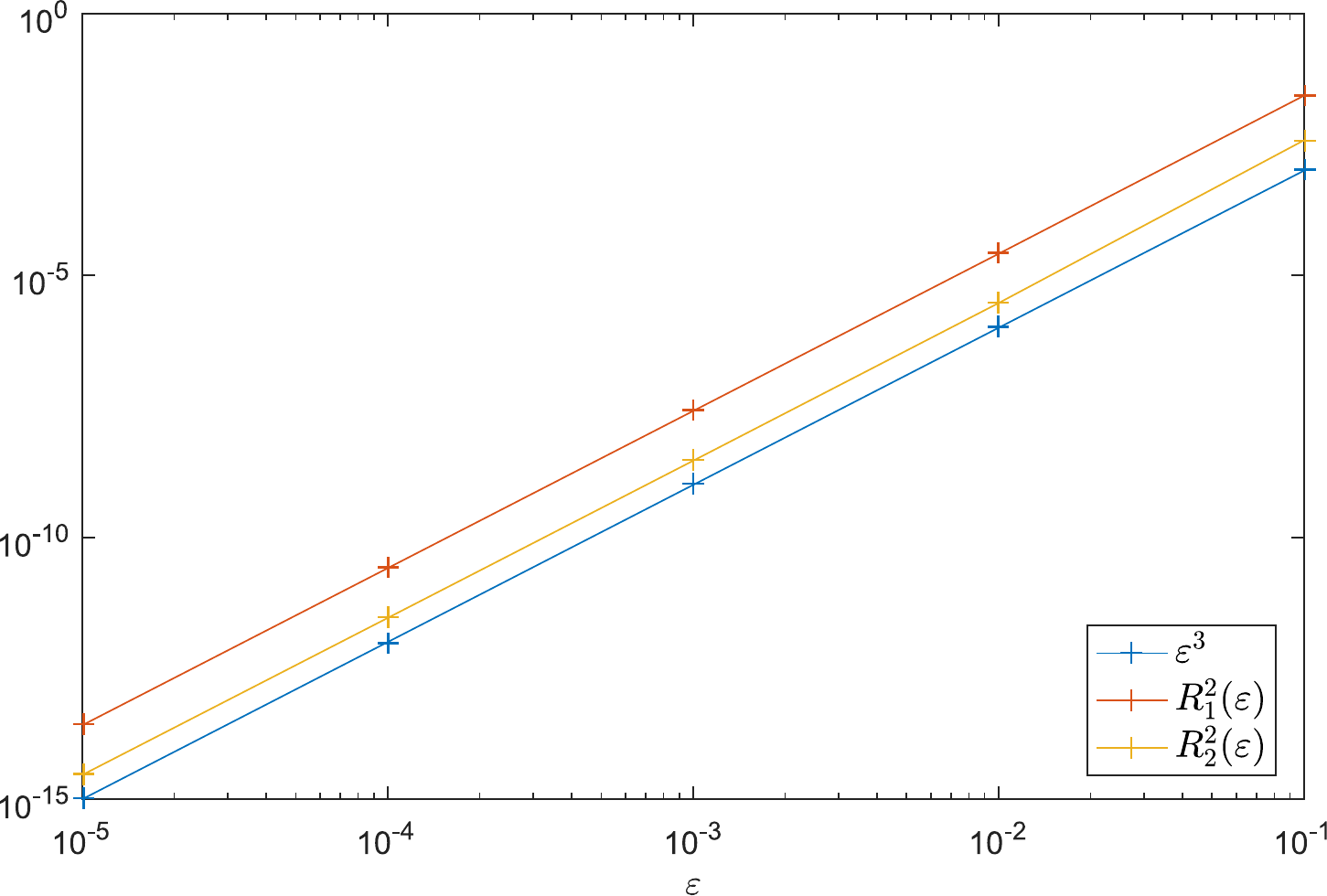}
\caption{The residues $R_1^{\infty}$ and $R_2^{\infty}$ as a function of $\eps$.}
\label{R2curves}
\end{figure}
\begin{figure}[H]
\includegraphics[scale=0.7]{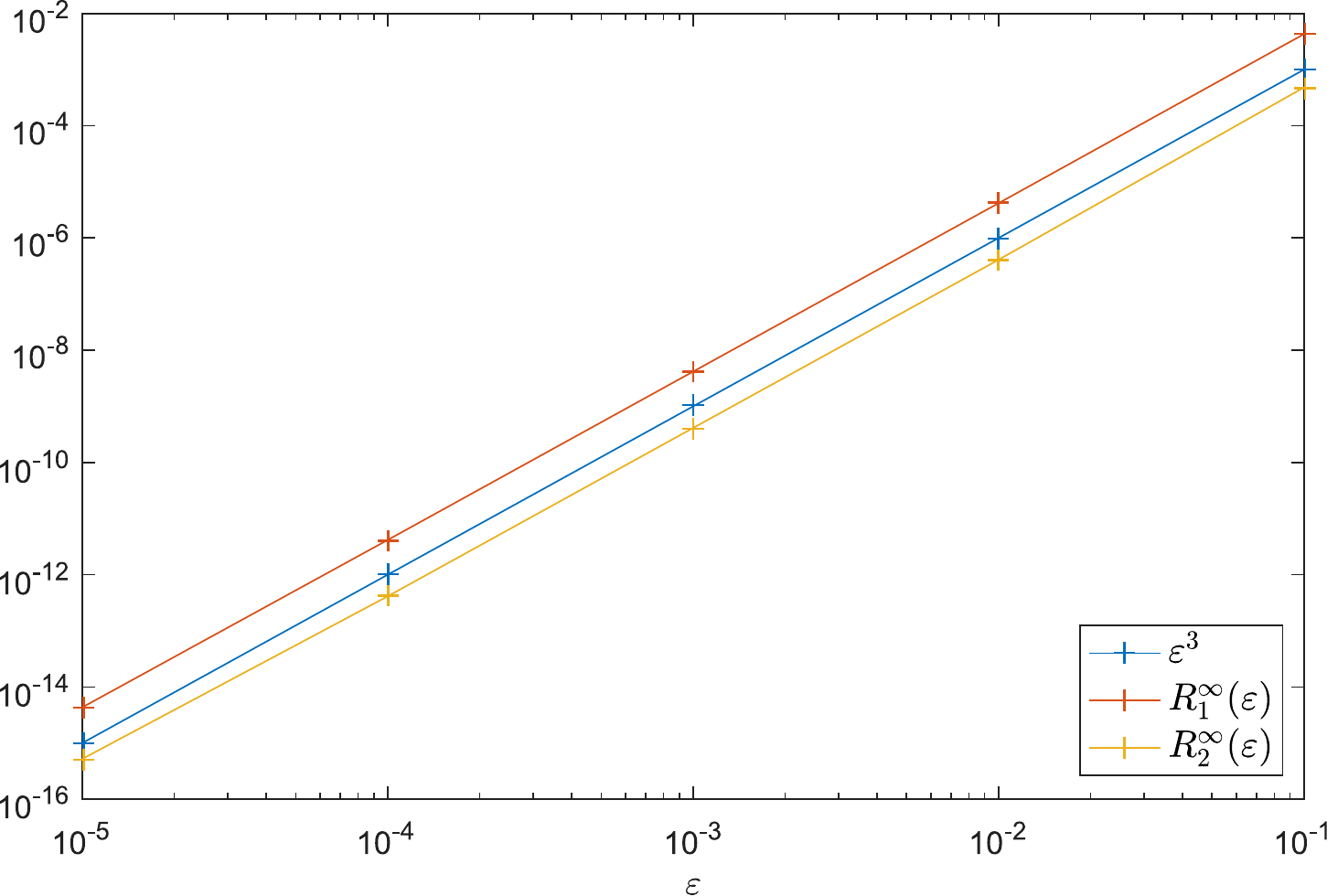}
\caption{The residues $R_1^{\infty}$ and $R_2^{\infty}$ as a function of $\eps$.}
\label{Rinfcurves}
\end{figure}
One clearly sees that the curves of the residues for both $p=1$ and $p=\infty$ are both parallel to $\eps^3$. This shows that the residues convergence rate is $\mathcal{O}(\eps^3)$, which is in total agreement with our theoretical result.
\bibliographystyle{siam}

\end{document}